\def\titlerunning#1{\gdef\titrun{#1}}
\def\author#1{\gdef\autrun{\def\and{\unskip, }#1}\gdef\@author{#1}}
\def\address#1{{\def\and{\\\hspace*{18pt}}\renewcommand{\thefootnote}{}%
\footnote {#1}}%
\markboth{\autrun}{\titrun}}
\def\email#1{e-mail: #1}
\def\subjclass#1{{\renewcommand{\thefootnote}{}%
\footnote{\emph{Mathematics Subject Classification (2010):} #1}}}
\def\keywords#1{\par\medskip
\noindent\textbf{Keywords.} #1}
\newtheorem{thm}{Theorem}[section]
\newtheorem{lem}[thm]{Lemma}
\newtheorem{prop}[thm]{Proposition}
\newtheorem{defn}[thm]{Definition}
\newtheorem{remark}{Remark}
\theoremstyle{definition}
\numberwithin{equation}{section}
\newcommand{\Z}{{\mathbb Z}} 
\newcommand{\Q}{{\mathbb Q}}
\newcommand{\R}{{\mathbb R}}
\newcommand{\C}{{\mathbb C}}
\newcommand{\T}{{\mathbb T}}
\def\scrL{{\mathcal L}}
\def\scrN{{\mathcal N}}
\def\scrQ{{\mathcal Q}}
\def\scrS{{\mathcal S}}
\renewcommand{\i}{{\mathrm{i}}}
\newcommand{\Vol}{\operatorname{vol}}
\newcommand{\area}{\operatorname{area}}
\newcommand{\interior}{\operatorname{int}}
\newcommand{\sgn}{\operatorname{sgn}}
\newcommand{\be}{\begin{equation}}
\newcommand{\ee}{\end{equation}}
\newcommand{\bs}{\begin{split}}
\newcommand{\es}{\end{split}}
\newcommand{\bra}{\left\langle}
\newcommand{\ket}{\right\rangle}
\numberwithin{equation}{section}
\begin{document}


\baselineskip=17pt


\titlerunning{Superscars in the \v{S}eba billiard}

\title{Superscars in the \v{S}eba billiard}

\author{P\"ar Kurlberg
\and 
Henrik Uebersch\"ar}

\date{June 25, 2015}

\maketitle

\address{Department of Mathematics, KTH Royal Institute of Technology, SE-10044 \\ Stockholm, Sweden; \email{kurlberg@math.kth.se}
\and
Laboratoire Paul Painlev\'e, Universit\'e Lille 1, CNRS U.M.R. 8524
59655 Villeneuve d'Ascq Cedex​, France; \email{henrik.ueberschar@math.univ-lille1.fr}}

\thanks{P.K. was partially supported by grants from 
the G\"oran Gustafsson Foundation
for Research in Natural Sciences and
Medicine, and the Swedish Research Council (621-2011-5498).}

\subjclass{Primary 81Q50; Secondary 58J50}


\begin{abstract}
  We consider the Laplacian with a delta potential (also known as a
  ``point scatterer'', or ``Fermi pseudopotential'') on an {\em
    irrational} torus, where the square of the side ratio is {\em
    diophantine}. The eigenfunctions fall into two classes --- ``old''
  eigenfunctions (75\%) of the Laplacian which vanish at the support
  of the delta potential, and therefore are not affected, and ``new''
  eigenfunctions (25\%) which are affected, and as a result feature a
  logarithmic singularity at the location of the delta potential.

  Within a {\em full} density subsequence of the new eigenfunctions we
  determine all semiclassical measures in the weak coupling regime and
  show that they are localized along $4$ wave vectors in momentum
  space --- we therefore prove the existence of so-called ``superscars''
  as predicted by Bogomolny and Schmit \cite{BogomolnySchmit}. 
  
  This result contrasts the phase space {\em equidistribution}
  which is observed for a {\em full} density subset of the new
  eigenfunctions of a point scatterer on a {\em rational} torus
  \cite{KurlbergU}.  Further, 
  in the strong coupling limit we show that a weaker form of
  localization holds for an   {\em essentially full} density
    subsequence of the new   eigenvalues; in particular quantum
  ergodicity does not hold. 

  We also explain how our results can be modified for rectangles with
  Dirichlet boundary conditions with a point scatterer in the
  interior. In this case our results extend previous work of 
  Keating, Marklof and Winn who proved
  the existence of localized semiclassical measures under a
  clustering condition on the spectrum of the Laplacian.

\keywords{Superscars, \v{S}eba billiard, Semiclassical Measures}
\end{abstract}

\section{Introduction}


In the Quantum Chaos literature the \v{S}eba billiard \cite{Seba}, a delta
potential placed inside an irrational rectangular billiard, has
attracted considerable attention
\cite{Shigehara1,Shigehara2,ShigeharaCheon3D,Shigehara3,SebaExner,Stoeckmann,BogomolnyGerlandSchmit}. \v{S}eba
introduced the model to investigate the transition between
integrability and chaos in quantum systems and numerical experiments
revealed features characteristic of  chaotic systems: level
repulsion and a Gaussian value distribution of the wave functions ---
in agreement with Berry's random wave conjecture \cite{Berry}.

The present paper deals with irrational tori having
diophantine\footnote{An irrational $\gamma$ is diophantine if there
  exist constants $C>0$, $k\geq2$ such that $|\gamma-p/q|>Cq^{-k}$ for
  any rational $p/q$.} aspect ratio; for convenience the main focus is on
periodic rather than Dirichlet boundary conditions, but the methods
also apply in the latter case (cf. Appendix A.)

The eigenfunctions of this system fall into two classes --- old and
new eigenfunctions. The old eigenfunctions are simply eigenfunctions
of the Laplacian which vanish at the position of the scatterer $x_0$
and therefore do not feel its presence. In the case of an irrational
torus they make up 75\% of the spectrum. In this paper we will only be
interested in the new eigenfunctions, which do feel the effect of the
scatterer and feature a logarithmic singularity at $x_0$. They make up
the remaining 25\% of the spectrum.

\subsection{Statement of the main result}

We prove that a full density subsequence of the new eigenfunctions of
the point scatterer fail to equidistribute in phase space in the {\em
  weak} coupling limit. Specifically, these eigenfunctions become
localized (``scarred'', or even ``superscarred'') in $4$ wave vectors
in momentum space and we are able to classify all possible
semiclassical measures which may arise along this sequence in the weak
coupling regime (i.e., fixed self-adjoint extensions).
Moreover, in the {\em strong} coupling regime (where the self-adjoint
extension parameter varies with the eigenvalue) we are able to show a
somewhat weaker result, namely that a subsequence of almost full
density fails to equidistribute in phase space.

To describe this more precisely, we first introduce semiclassical
measures arising from eigenfunctions.
\begin{defn}
  Let $\lambda$ be a new eigenvalue of the scatterer and denote by
  $g_\lambda$ the corresponding $L^2$-normalized eigenfunction. Let
  $a\in C^\infty(S^*\T^2)$ be a classical observable and let $Op(a)$
  be a zeroth order pseudo-differential operator\footnote{We will give
    a precise definition of our choice of quantization in section
    \ref{Quantisation}} associated with $a$. We define the
  distribution $d\mu_\lambda$ by the identity \be \bra
  Op(a)g_\lambda,g_\lambda\ket=\int_{S^*\T^2} a \; d\mu_\lambda.  \ee
  By the semiclassical measures for a certain sequence $\{\lambda_n\}$
  we mean the limit points of $\{d\mu_{\lambda_n}\}$ in the weak-*
  topology.
\end{defn}

By a subsequence of full density we mean the following.
\begin{defn}
Let $\scrS\subset\R$ be a countably infinite sequence of increasing numbers which accumulate at infinity. We say that $\scrS'\subset\scrS$ is a subsequence of full density if
\be
\lim_{X\to\infty}\frac{\#\{x\in\scrS' \mid x\leq X\}}{\#\{x\in\scrS \mid x\leq X\}}=1.
\ee
\end{defn}

The following definitions will be used throughout the paper.
\begin{defn}
\label{def:lattice-def}
For $a>0$ a fixed real number, define a lattice
$$\scrL_0:=\Z(a,0)\oplus\Z(0,1/a)
\subset \R^2,
$$ 
let $\scrL$ denote the dual lattice of $\scrL_0$, and let $\scrN$
denote the set of distinct Laplacian eigenvalues (i.e., squares of
norms of the lattice vectors in $\scrL$.)  Further, given $m \in
\scrN$, denote by $\lambda_m<m$ the new eigenvalue of the
scatterer
associated
with $m$. 
\end{defn}
We remark that $\{ \lambda_{m} : m \in \scrN \}$ is the {\em full} set of
new eigenvalues; this is due to a certain interlacing property, see
  Section~\ref{sec:spectrum-of-a-point-scatterer} for more details.
With notations as above, the main result of this paper is the
following result, valid for the {\em weak} coupling limit.
\begin{thm}\label{MomentumScars}
Assume that $a^4\notin\Q$ is diophantine, and consider the point
scatterer perturbation of the Laplacian on
the flat torus 
$\T^2=\R^2/2\pi\scrL_0$. There exists a subsequence $\scrN'\subset\scrN$ of full density such that
the set of semiclassical measures of the sequence $d\mu_{\lambda_m}$,
$m\in\scrN'$, is given by
the following subset of the set of probability measures on the unit cotangent 
bundle $S^*\T^2$:
\be
\scrQ = \left\{ \frac{dx}{\Vol(\T^2)}\times \frac{1}{4}(\delta_{\theta}+\delta_{-\theta}+\delta_{\pi-\theta}+\delta_{\pi+\theta})(\phi)\frac{d\phi}{2\pi} \; \Big| \; \theta\in[0,\pi/2]\right\}.
\ee
\end{thm}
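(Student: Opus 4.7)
The plan is to use the Green's-function representation of the new eigenfunctions to convert the matrix element $\bra Op(a)g_{\lambda_m},g_{\lambda_m}\ket$ into a double sum over pairs $\xi,\xi'\in\scrL$, weighted by $[(|\xi|^2-\lambda_m)(|\xi'|^2-\lambda_m)]^{-1}$ and by Fourier coefficients of the symbol $a$. Writing
\be
g_{\lambda_m}(x)=\frac{1}{Z_m^{1/2}}\sum_{\xi\in\scrL}\frac{e^{i\xi\cdot(x-x_0)}}{|\xi|^2-\lambda_m},\qquad Z_m=\sum_{\xi\in\scrL}\frac{1}{(|\xi|^2-\lambda_m)^2},
\ee
the weights are largest when both $\xi,\xi'$ lie on the shell $|\xi|^2=m$. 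The strategy is to show that the on-shell diagonal contribution produces the measures in $\scrQ$, and that all other pieces vanish in the weak-$\ast$ limit along a suitable full-density subsequence $\scrN'\subset\scrN$.

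Geometrically, the shell $\{\xi\in\scrL:|\xi|^2=m\}$ consists of vectors $(n_1/a,n_2 a)$ with $n_1^2+a^4 n_2^2=a^2 m$. Since $a^4\notin\Q$, for a full-density set of $m\in\scrN$ this equation has a unique solution $(|n_1|,|n_2|)$, so the shell contains exactly four points, with angles $\pm\theta_m$ and $\pi\pm\theta_m$. I would take $\scrN'$ to be this geometric set, further intersected with the full-density set on which (i) neither $|n_1|$ nor $|n_2|$ is exceptionally small, and (ii) the interlacing gap $m-\lambda_m$ is well controlled (neither much smaller than a polynomial in $m^{-1}$ nor too close to the nearest-neighbor spacing $m-m_{\mathrm{prev}}$ in $\scrN$), the second condition being extracted from the spectral equation for the new eigenvalues in the fixed-self-adjoint-extension regime.

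On $\scrN'$ the on-shell diagonal contributes
\be
\frac{1}{Z_m(m-\lambda_m)^2}\sum_{k=1}^{4}\int_{\T^2}a\bigl(x,\xi^{(k)}/|\xi^{(k)}|\bigr)\,\frac{dx}{\Vol(\T^2)},
\ee
so once one shows $Z_m(m-\lambda_m)^2\to 4$, extracting a further subsequence along which $\theta_m\to\theta$ produces exactly the element of $\scrQ$ indexed by $\theta$, with the uniform distribution in $x$ coming from the $\xi=\xi'$ position integral and the four equal masses $1/4$ in momentum from the four shell directions. The on-shell off-diagonal terms vanish because along $\scrN'$ the differences $\xi-\xi'$ of distinct shell vectors satisfy $|\xi-\xi'|\to\infty$, and rapid decay of $\hat a$ in the position variable (a consequence of $a\in C^\infty$) kills them.

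The main obstacle is the off-shell estimate. The diophantine hypothesis on $a^4$ gives a lower bound $|m'-m|\gg m^{-k}$ on the gap between any two distinct elements of $\scrN$, hence $\bigl||\xi'|^2-\lambda_m\bigr|\gg m^{-k}$ for $\xi'$ off shell, provided $m\in\scrN'$ so that $m-\lambda_m$ is bounded away from that neighbor gap. Combined with super-polynomial decay of $\hat a$ (which truncates the effective range of $\xi,\xi'$ to those with $|\xi-\xi'|\lesssim m^{\varepsilon}$) and the lower bound on $m-\lambda_m$ available on $\scrN'$, one can dominate the off-shell contributions to both $Z_m$ and the matrix element by a factor $o(1)$ times the on-shell term. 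The final step is to realise every $\theta\in[0,\pi/2]$ as a limit of the angles $\theta_m$ along $\scrN'$; this reduces to an equidistribution statement for the directions of the lattice solutions of $n_1^2+a^4 n_2^2=a^2 m$, which is standard under the diophantine hypothesis.
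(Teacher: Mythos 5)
Your plan is structurally sound: decompose the matrix element into on-shell and off-shell pieces, show the off-shell piece vanishes, and use the geometric structure of the four shell points to produce the delta measures. The paper takes the same global shape. However, there is a genuine gap in your treatment of the off-shell estimate — the crucial analytic step — and your proposed tools cannot close it.

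For a pure momentum observable $e_{0,k}$ (exactly the observables that produce the localised limit measures), the matrix element reduces to
\begin{equation}
\bra Op(e_{0,k})g_{\lambda_m},g_{\lambda_m}\ket
=\frac{\sum_{n\in\scrN} w_k(n)(n-\lambda_m)^{-2}}{\sum_{n\in\scrN} r(n)(n-\lambda_m)^{-2}},
\end{equation}
and the problem is to show that the $n=m$ term dominates the sum. Equivalently one needs $(m-\lambda_m)^2\sum_{n\neq m}1/(n-\lambda_m)^2 \to 0$ along a full-density subsequence. Your two tools do not reach this. First, the super-polynomial decay of $\hat a$ in the position frequency $\zeta=\xi-\xi'$ is vacuous here since $\zeta=0$; the sum runs over all $n\in\scrN$ with no truncation. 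Second, the diophantine lower bound $|n-m|\gg m^{-k}$ gives a pointwise bound on each term but says nothing about how many $n\in\scrN$ lie in a unit interval around $m$ — the standard lattice-point count in a thin annulus permits up to $O(m^{1/3})$ such $n$ for individual $m$, which would make $\sum_{n\neq m}1/(n-m)^2$ polynomially large. A lower bound on gaps is simply the wrong kind of information: one needs control on the \emph{number} of near neighbours, not on the \emph{size} of the smallest gap. The paper gets exactly this via the Eskin–Margulis–Mozes pair correlation theorem (Theorem 2.6), which is the real reason the diophantine hypothesis on $a^4$ enters: it yields an $L^2$ bound on interval counts $M(k)=|\scrN\cap[k,k+1]|$ (Lemma~\ref{lem:difference-bigger-than-three}), and then a Chebyshev argument (Proposition~\ref{prop:key-prop}) produces a full-density set on which $\sum_{n\neq m}1/(n-m)^2=O((\log m)^{2-\epsilon})$. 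Combined with the clumping estimate $m-\lambda_m\ll f(m)/\log m$ from \cite{RU2}, the off-shell contribution is $o(1)$.

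A secondary point: your condition (ii), that $m-\lambda_m$ should not be much smaller than a polynomial in $m^{-1}$, is unnecessary. A very small value of $m-\lambda_m$ only strengthens the dominance of the on-shell term, since the same factor $(m-\lambda_m)^{-2}$ appears in both numerator and denominator and cancels. Only the \emph{upper} bound on $m-\lambda_m$ (clumping) is used in the paper. Finally, to show the mixed-mode matrix elements vanish, the paper invokes the Rudnick--Ueberschär equidistribution theorem (Theorem~\ref{mixed}) for a full-density subsequence, rather than an on-shell-off-diagonal argument based on $|\xi-\xi'|\to\infty$; your version would at best handle the on-shell off-diagonal terms and still leaves the generic mixed-mode terms unaddressed.
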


\begin{remark}  As already mentioned, the result can be extended to irrational
  (diophantine) rectangles with Dirichlet boundary conditions and a
  delta potential in the interior --- the original setting of \v{S}eba's
  paper. In the appendix to this paper we illustrate how our proof
  can be modified. For a generic
  position of the scatterer we prove scarring for a proportion
  $1-\epsilon$ of all eigenfunctions, for any $\epsilon>0$. In the
  non-generic case of positions with rational coordinates, a positive
  proportion of the eigenfunctions do not feel the effect of the
  scatterer, hence are old Laplacian eigenfunctions. However, our
  theorem still applies to the new eigenfunctions associated with the
  remaining part of the spectrum.
\end{remark}

In the {\em strong} coupling
limit, which is studied in the physics literature, and in which
features such as level repulsion between the new eigenvalues are
observed, we are able to prove the following somewhat weaker result
(see Section 4 for its proof.)
\begin{thm}
Given $\delta>0$ there exists a  subsequence of the new spectrum, of
density at least $1-\delta$, 
on which the momentum representation of the new eigenfunctions carries
positive mass  on a finite 
number of points\footnote{We allow these points to depend on the
  eigenvalue.}. For $\delta$ fixed, the mass is uniformly bounded from
  below, and the number of  points is uniformly bounded from above.
\end{thm}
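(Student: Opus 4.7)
My plan is to use the explicit momentum-space formula for the new eigenfunctions. As will be established in Section~\ref{sec:spectrum-of-a-point-scatterer}, the $L^2$-normalised eigenfunction $g_{\lambda_m}$ attached to a new eigenvalue $\lambda_m$ has Fourier coefficients $\hat g_{\lambda_m}(\xi)\propto 1/(|\xi|^2-\lambda_m)$ for $\xi\in\scrL$, so its momentum push-forward is the probability measure
\[
\nu_m=\frac{1}{C_m}\sum_{\xi\in\scrL}\frac{\delta_\xi}{(|\xi|^2-\lambda_m)^2},
\qquad C_m=\sum_{\xi\in\scrL}\frac{1}{(|\xi|^2-\lambda_m)^2}.
\]
A key structural input is that $a^4\notin\Q$ forces each level set $\{\xi\in\scrL:|\xi|^2=n\}$ to have at most four elements: indeed $m_1^2/a^2+m_2^2a^2=m_1'^2/a^2+m_2'^2a^2$ with $a^4$ irrational implies $(m_1,m_2)=(\pm m_1',\pm m_2')$. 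Concentration on a single sphere therefore automatically means concentration on at most four points in $\scrL$.

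The argument then reduces to the following clustering statement: for every $\delta>0$ there exist $\eta=\eta(\delta)>0$, $K=K(\delta)<\infty$ and a subsequence $\scrN_\delta\subset\scrN$ of density $\geq 1-\delta$, such that every $m\in\scrN_\delta$ admits $n(m)\in\scrN$ with
\[
|\lambda_m-n(m)|<\eta\qquad\text{and}\qquad \min_{n'\in\scrN\setminus\{n(m)\}}|n'-\lambda_m|>1/K.
\]
Granted this, the distinguished sphere contributes at least $1/\eta^2$ to $C_m$, while the remaining part of $C_m$ is bounded by $O_K(1)$ using $r(n)=n^{o(1)}$ together with the two-dimensional tail estimate $\sum_{n:|n-\lambda_m|>1/K}r(n)/(n-\lambda_m)^2=O_K(1)$. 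Therefore $\nu_m$ places mass at least $c(\delta)>0$ on the at-most-four-point set $\{\xi\in\scrL:|\xi|^2=n(m)\}$, yielding both the uniform lower bound on the mass and the uniform upper bound on the number of points.

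The main obstacle is the clustering statement itself. In the strong-coupling regime the self-adjoint extension parameter is re-tuned with each eigenvalue, which through the Krein quantization condition forces $\lambda_m$ close to a Laplacian eigenvalue. The density argument proceeds in two parts. First, one discards those $m$ for which the gap of $\scrN$ bracketing $\lambda_m$ is shorter than $1/K$; by Landau's asymptotic $\#\{n\in\scrN:n\leq X\}\asymp X/\sqrt{\log X}$ combined with a pigeonhole bound on short gaps, this discarded set has density $o_K(1)$ as $K\to\infty$. Second, for the surviving $m$ one applies the quantization condition to produce an $n(m)\in\scrN$ with $|\lambda_m-n(m)|<\eta$. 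Simultaneously controlling the proximity of $\lambda_m$ to $n(m)$ and the isolation of $n(m)$ from the remainder of $\scrN$ is the technical heart of Section~4, and is where I expect the delicate bookkeeping to be concentrated.
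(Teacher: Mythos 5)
Your reduction to a single isolated sphere is a stronger statement than the paper actually establishes, and the outline you give for proving it has several concrete errors.

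First, the counting asymptotic is wrong: for a diophantine rectangle the set $\scrN$ of distinct Laplacian eigenvalues has \emph{positive} density, $|\scrN(X)| \sim \tfrac{\pi}{4} X$ (Weyl's law, equation~\eqref{eq:Weyl-law}), not the Landau-type $X/\sqrt{\log X}$ which pertains to sums of two squares. The correct input for controlling short gaps is the Eskin--Margulis--Mozes pair-correlation theorem (Theorem~\ref{EMM}), not a pigeonhole bound on a sparse set. Second, the appeal to a ``quantization condition'' to force $\lambda_m$ close to a norm is a red herring in the strong coupling regime: the paper treats \emph{any} interlacing sequence, with no extension-theoretic constraint available, and the only information is $\lambda_m \in (m_-, m)$. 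Proximity to one of $m_-, m$ then follows purely from the interlacing once the bracketing gap is bounded, and the quantity $\eta(\delta)$ is necessarily of order the typical gap (hence bounded, not small).

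More substantively, your clustering statement requires that \emph{exactly one} norm lies within distance $\eta$ of $\lambda_m$ while \emph{all} others are $> 1/K$ away. Since $\lambda_m$ may sit anywhere in the gap $(m_-, m)$ and could be close to either endpoint, enforcing this isolation requires simultaneous conditions on several consecutive gaps around $m$ (to exclude $m_{--}, m_-, m, m_+$ from crowding $\lambda_m$), and the conditions one needs depend on where in the gap $\lambda_m$ lands. You acknowledge this is the ``technical heart'' and expect the paper to supply it --- but the paper never proves this isolation. Instead Section~\ref{sec:strong-coupl-limit} does something strictly weaker and cleaner: using Lemma~\ref{lem:many-small-gaps} (Chebyshev on the long gaps), Lemma~\ref{short-interval-multiplicity-bound} (pair correlation bounds the number of norms in a window of fixed width $2D$ around $m$), and Lemma~\ref{lem:difference-bigger-than-three} (the tail bound), the paper concentrates the mass on the union of at most $E+1$ spheres $\{n : |n-m| \leq 3\}$ rather than a single one. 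This removes the dependence on where $\lambda_m$ falls in the gap and sidesteps the delicate bookkeeping entirely, at the cost of a larger --- but still uniformly bounded --- number of points. Your proposal therefore leaves a genuine gap: the reduction is to a statement that is harder to prove than the theorem, and is not the route the paper takes.
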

These results may also be easily modified for rectangular domains with
Dirichlet or Neumann boundary conditions (cf.
Remark~\ref{rem:strong-coupling-extension} in the appendix.)

\subsection{Discussion}

The scarring phenomenon described above contrasts the equidistribution
of a full density subset of new eigenfunctions for a point scatterer
on a square torus, both in the weak as well as strong coupling limits
(cf. \cite{KurlbergU}.)  Interestingly, a key feature for obtaining
equidistribution for the square torus is that the unperturbed spectrum
has unbounded multiplicities (along a generic sequence), whereas in the
diophantine aspect ratio case, where the unperturbed spectrum has bounded
multiplicities, most eigenfunctions scar  strongly. 

Moreover, the type of scarring
proven here seems quite different from the sequence of scars
established by Hassell \cite{Hassell} for the stadium billiard (his
construction is based on quasimodes corresponding to a certain sparse
sequence of ``bouncing ball modes''),
or the construction of scars for cat maps with small quantum periods
by de Bi\`evre, Faure and Nonnenmacher \cite{BFN} (they construct
sparse sequences at most half of whose mass is scarred, and a crucial
feature in the construction is having essentially {\em maximal}
spectral multiplicities; note that Bourgain has shown
\cite{Bourgain-cat} that scarring does not occur for cat maps if
multiplicities are just slightly smaller than maximal.)  We also
mention Kelmer's construction \cite{kelmer} of scars for certain
higher dimensional analogues of cat maps; here the existence of
invariant rational isotropic subspaces plays a key role.


In the original setting of the \v{S}eba billiard, i.e., for irrational
rectangles with Dirichlet boundary conditions and a delta potential in
the interior (and in the weak coupling limit),
Keating, Marklof and Winn showed \cite{KeatingMarklofWinn2} that
eigenfunctions can scar in momentum space, provided that the
unperturbed eigenfunctions are bounded from below at the location of
the scatterer, together with a certain clustering assumption on the
spectrum of the Laplacian.  (The clustering assumption is implied by
the Berry-Tabor conjecture, which suggests that the eigenvalues of a
generic integrable system behave like points from a Poisson process.)
Our proof can easily be modified for this setting.

Our results also show that contrary to the title of \v{S}eba's original
paper \cite{Seba} there is no ``wave chaos'' with respect to the wave
functions of diophantine rectangular quantum billiards (even though
chaotic effects, such as level repulsion, appear in the strong
coupling regime \cite{BogomolnyGerlandSchmit}) --- quantum
ergodicity fails, both in the weak and strong coupling regimes. Moreover, in the specific setting of \v{S}eba's original paper (weak
coupling and Dirichlet boundary conditions), we show that for any 
$\epsilon>0$ a proportion $1-\epsilon$ of the
eigenfunctions are scarred in momentum space and we determine all
possible scarred measures explicitly.

\subsection*{Acknowledgements} We would like to thank Jens Marklof, St\'ephane Nonnenmacher and Zeev Rudnick for very helpful discussions about this work. We would also like to thank the anonymous referee for his careful reading of the paper and many suggestions which led to the improvement of this paper.

\section{Background}
This section has the purpose of providing the reader with a brief
summary of various results which will be used in the paper.

\subsection{The spectrum of a point scatterer on an irrational torus}
\label{sec:spectrum-of-a-point-scatterer}
In order to realize the formal operator $$-\Delta+\alpha\delta_{x_0},
\quad (\alpha,x_0)\in\R\times\T^2$$ we use self-adjoint extension
theory. We simply state the most important facts in this section to
make the paper as self-contained as possible. For a more detailed
discussion of the theory we refer the reader to the introduction and
appendix of the paper \cite{RU}.

Recall that $\T^2=\R^2/2\pi\scrL_0$. We
restrict the positive Laplacian $-\Delta$ to the
domain $$D_0=C^\infty_c(\T^2\setminus\{x_0\})$$ of functions which
vanish near the position of the scatterer: $$H_0=-\Delta|_{D_0}$$ The
operator $H_0$ is symmetric, but it fails to be essentially
self-adjoint, in fact $H_0$ has deficiency indices $(1,1)$. Therefore
there exists a one-parameter family of self-adjoint extensions
$H_\varphi$, $\varphi\in(-\pi,\pi]$, which are restrictions of the
adjoint $H_0^*$ of the restricted operator to the domain of functions
$f\in Dom(H_0^*)$ which satisfy the logarithmic boundary
condition $$f(x)=C(\cos(\varphi/2)\frac{\log|x-x_0|}{2\pi}+\sin(\varphi/2))+o(1)$$
as $x\to x_0$ for some constant $C\in\C$. The case $\varphi=\pi$
corresponds to $\alpha=0$, i. e. we simply obtain the unrestricted
Laplacian in this case. In this paper we will study the operators
$H_\varphi$, $\varphi\in(-\pi,\pi)$. In the physics literature
\cite{Shigehara1} the operator $H_\varphi$ for fixed $\varphi$ 
is known as the ``weak coupling'' quantization of the scatterer.

Let us now focus on the special case of an irrational torus
$\T^2$. This means we take a lattice $\scrL_0$ such that
$a^4\notin\Q$. The spectrum of the operator 
$H_\varphi$ consists of two parts:\\ 
\begin{itemize}
\item[(A)] Eigenfunctions which vanish at $x_0$ and therefore do not
  ``feel'' the scatterer. These are simply eigenfunctions of the
  Laplacian, and  occur with multiplicity $m-1$ where $m$ is the
  multiplicity of the corresponding eigenspace of the Laplacian. The
  multiplicity of the positive old eigenvalues is $3$, unless the
  corresponding lattice vector lies on one of the axes, in which case
  it is $1$.\\ 

\item[(B)] Eigenfunctions which feature a logarithmic singularity at
  $x_0$.  These ``feel'' the effect of the scatterer, and turn out to be
  given by the Green's functions
  $G_\lambda=(\Delta+\lambda)^{-1}\delta_{x_0}$.  The new eigenvalues
  $\lambda$ occur with multiplicity $1$ and interlace with the ``old''
  Laplace eigenvalues (counted without multiplicity.)\\
\end{itemize} 

We will be interested in the eigenfunctions of type (B), and in
particular we will study how these eigenfunctions are distributed in
phase space as the eigenvalue tends to infinity. Recall that $\scrN$
denotes the set of distinct eigenvalues of the Laplacian on $\T^2$
(these are just norms squared of the lattice vectors in $\scrL$). For
given $n\in\scrN$ denote its multiplicity by $r(n)$.

The eigenvalues of type (B) are solutions to the equation
\be\label{weak coupling quantisation}
\sum_{n\in\scrN}r(n)\left(\frac{1}{n-\lambda}-\frac{n}{n^2+1}\right)=\tan(\varphi/2)\sum_{n\in\scrN}\frac{r(n)}{n^2+1}
\ee and they {\em interlace} with the distinct Laplacian
eigenvalues $$\scrN=\{0=n_0<n_1<n_2<\cdots\}$$ as follows \be
\lambda_{n_0}<0=n_0<\lambda_{n_1}<n_1<\lambda_{n_2}<n_2<\cdots \ee
where the new eigenvalue associated with $n\in\scrN$ is denoted by
$\lambda_n$.

\subsection{Quantization of phase space observables}\label{Quantisation}
Recall that  $\T^2=\R^2/2\pi\scrL_0$ where $\scrL_0=\Z(a,0)\oplus \Z(0,1/a)$,
$a>0$, and $\scrL$ denotes its dual lattice. Consider a classical
symbol $a\in C^\infty(S^*\T^2)$, where $S^*\T^2\simeq \T^2\times S^1$
denotes the unit cotangent bundle of $\T^2$. We may expand $a$ in the
Fourier series
\begin{equation}
a(x,\phi)=\sum_{\zeta\in\scrL,k\in\Z}\hat{a}(\zeta,k)e^{\i \left\langle \zeta,x \right\rangle+\i k\phi}.
\end{equation}
We choose the following quantization of the symbol $a$. Let $f\in L^2(\T^2)$ with Fourier expansion
\begin{equation}
f(x)=\sum_{\xi\in\scrL}\hat{f}(\xi)e^{\i\left\langle \xi,x \right\rangle}.
\end{equation}
On the Fourier side the action of the $0$-th order pseudodifferential
operator $Op(a)$ is defined by (we have chosen a ``right''
quantization, which means we first apply momentum then position
operators (cf. section 2.1, in \cite{KurlbergU}))
\begin{equation}
\widehat{(Op(a)f)}(\xi)=\sum_{\zeta\neq\xi\in\scrL,k\in\Z}\hat{a}(\zeta,k)\left(\frac{\tilde{\xi}-\tilde{\zeta}}{|\xi-\zeta|}\right)^k \hat{f}(\xi-\zeta)+\sum_{k\in\Z}\hat{a}(\xi,k)\hat{f}(0), 
\end{equation}
where for a given $\xi=(\xi_1,\xi_2)\in\scrL$ we define
$\tilde{\xi}:=\xi_1+\i\xi_2$. 

In terms of the Fourier coefficients the matrix elements of $Op(a)$ can be written as
\begin{equation}
\left\langle Op(a)f,f \right\rangle=\sum_{\xi\in\scrL} \widehat{(Op(a)f)}(\xi)\overline{\hat{f}(\xi)}.
\end{equation}
With $e_{\zeta,k}(x,\phi):=e^{\i\left\langle \zeta,x \right\rangle+\i
  k\phi}$, we then have
\begin{equation}
\left\langle Op(e_{\zeta,k})f,f \right\rangle=\sum_{\xi\in\scrL\setminus\{\zeta\}} \left(\frac{\tilde{\xi}-\tilde{\zeta}}{|\xi-\zeta|}\right)^k\overline{\hat{f}(\xi)}\hat{f}(\xi-\zeta)+\overline{\hat{f}(\zeta)}\hat{f}(0).
\end{equation}

\subsubsection{Mixed modes}
If $\zeta\neq0$ we have the bound
\begin{equation}
|\left\langle Op(e_{\zeta,k})f,f \right\rangle|\leq\sum_{\xi\in\scrL} |\hat{f}(\xi)||\hat{f}(\xi-\zeta)|.
\end{equation}
In the case $f=g_\lambda=G_\lambda/\|G_\lambda\|_2$ we have the $L^2$-expansion $$G_\lambda(x,x_0)=-\frac{1}{4\pi^2}\sum_{\xi\in\scrL} c(\xi)e^{\i\left\langle x-x_0,\xi \right\rangle}$$
where $c(\xi)=\frac{1}{|\xi|^2-\lambda}$.
We obtain
\begin{equation}
|\left\langle Op(e_{\zeta,k})g_\lambda,g_\lambda \right\rangle|\leq
\frac{\sum_{\xi\in\scrL} |c(\xi)||c(\xi-\zeta)|}{\sum_{\xi\in\scrL} |c(\xi)|^2}.
\end{equation}
In \cite{RU} it was proved that one can construct a full density
subsequence $\scrN'\subset\scrN$ such that for any nonzero lattice
vector $\zeta\in\scrL$ the matrix elements of $Op(e_{\zeta,k})$ vanish
as $n\to\infty$ along $\scrN'$. The following result was
obtained.
\begin{thm}\label{mixed}{\bf (Rudnick-U., 2012)}
Let $\scrL$ be a unimodular lattice as above. There exists a subsequence $\scrN'\subset\scrN$ of full density such that for any $\zeta\in\scrL$, $\zeta\neq0$, $k\in\Z$
\begin{equation}
\lim_{\substack{n\to\infty \\ n\in\scrN'}}\left\langle Op(e_{\zeta,k})g_{\lambda_n},g_{\lambda_n}\right\rangle=0.
\end{equation}
\end{thm}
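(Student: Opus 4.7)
The plan is to bound the Fourier--side ratio
$$R_n(\zeta) := \frac{\sum_{\xi\in\scrL}|c(\xi)c(\xi-\zeta)|}{\sum_{\xi\in\scrL}|c(\xi)|^2},\qquad c(\xi)=\frac{1}{|\xi|^2-\lambda_n},$$
appearing in the displayed estimate from Section~\ref{Quantisation}, and to exhibit a full density subsequence $\scrN'\subset\scrN$ along which $R_n(\zeta)\to 0$ for every fixed $\zeta\in\scrL\setminus\{0\}$, uniformly in $k$. Since $c(\xi)$ is singular only as $|\xi|^2\to\lambda_n$, and the interlacing inequality $\lambda_n<n$ places $\lambda_n$ just below the corresponding Laplacian level, both sums are dominated by lattice points in a thin shell around $|\xi|^2=n$; the ratio captures the extent to which this singular mass survives the shift $\xi\mapsto\xi-\zeta$.

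For the denominator I would simply retain the contribution of the $r(n)$ lattice points on the exact sphere $|\xi|^2=n$, obtaining the lower bound
$$\sum_{\xi\in\scrL}|c(\xi)|^2\ge \frac{r(n)}{(n-\lambda_n)^2}.$$
For the numerator I would split $\xi$ according to whether it lies in a shell $\bigl||\xi|^2-n\bigr|\le L$, with $L$ to be tuned. The off--shell part is controlled by $|c(\xi)|\le 1/L$ together with Cauchy--Schwarz and dyadic shell counting. The on--shell part is dangerous only when $\xi-\zeta$ also lies in the shell; from the identity $|\xi|^2-|\xi-\zeta|^2=2\xi\cdot\zeta-|\zeta|^2$ this forces $\xi$ into a neighbourhood of a sphere--hyperplane intersection, and the diophantine condition on $a^4$ bounds the number of such lattice points uniformly, outside a density--zero exceptional subset of $\scrN$.

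The remaining quantitative input is a lower bound on $n-\lambda_n$, which calibrates $L$ against the denominator estimate. Starting from the quantization equation
$$\sum_{m\in\scrN}r(m)\left(\frac{1}{m-\lambda}-\frac{m}{m^2+1}\right) = \tan(\varphi/2)\sum_{m\in\scrN}\frac{r(m)}{m^2+1},$$
a Borel--Cantelli type argument shows that the set of $n\in\scrN$ for which $n-\lambda_n$ is anomalously small has density zero; on the complementary full density subsequence one obtains a polynomial--in--$\log n$ lower bound for $n-\lambda_n$. Choosing $L$ to be a small positive power of this bound makes both contributions to the numerator asymptotically negligible compared with the denominator, and intersecting with the diophantine exceptional set of the previous step yields the required subsequence $\scrN'$ on which $R_n(\zeta)\to 0$.

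I expect the principal technical obstacle to lie in the shell--intersection count: bounding, uniformly in $n$ and with controlled dependence on $\zeta$, the number of $\xi\in\scrL$ with both $|\xi|^2$ and $|\xi-\zeta|^2$ within $L$ of $n$. This is where the diophantine property of $a^4$ enters essentially, ruling out unexpected near--coincidences between shifted values of the quadratic form $aX_1^2+X_2^2/a$, and where the scale $L$ must be balanced carefully against the lower bound on $n-\lambda_n$.
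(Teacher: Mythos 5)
The paper itself does not prove this statement --- Theorem~\ref{mixed} is cited from Rudnick--Uebersch\"ar \cite{RU} --- so there is no internal proof to compare against; I therefore assess the proposal on its own terms, keeping in mind the Remark immediately after the theorem, which is essential: the result is claimed to hold for \emph{any} sequence $\{\lambda_n\}$ interlacing the Laplacian spectrum, not just the eigenvalues of a fixed self-adjoint extension.

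Your central ``remaining quantitative input'', a lower bound $n-\lambda_n\gg(\log n)^{-C}$ extracted from the quantization equation by a Borel--Cantelli argument, is the weak point, for two independent reasons. First, it cannot be part of a proof of the theorem as stated: since the theorem must hold for an arbitrary interlacing sequence (this generality is actually used elsewhere in the paper, e.g.\ in Section~\ref{sec:strong-coupl-limit}), one is free to choose $\lambda_n$ as close to $n$ as one likes, so no lower bound on $n-\lambda_n$ can be assumed. Second, and more substantively, such a lower bound is not needed and tuning $L$ against it points the argument in the wrong direction: as $\lambda_n\to n^-$ the denominator $\sum_{\xi}|c(\xi)|^2\geq r(n)/(n-\lambda_n)^2$ blows up like $(n-\lambda_n)^{-2}$, while the problematic numerator terms (those with $|\xi|^2=n$) contribute only $r(n)(n-\lambda_n)^{-1}\cdot|c(\xi-\zeta)|$; their ratio to the denominator is $O\bigl((n-\lambda_n)\cdot|c(\xi-\zeta)|\bigr)$, which \emph{improves} as $n-\lambda_n$ shrinks. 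The quantity that actually controls the argument is $|c(\xi-\zeta)|=1/\bigl||\xi-\zeta|^2-\lambda_n\bigr|$ for $\xi$ on or near the sphere $|\xi|^2=n$. Since $|\xi-\zeta|^2-|\xi|^2=-2\langle\xi,\zeta\rangle+|\zeta|^2$ and $\langle\xi,\zeta\rangle$ is generically of size $\sqrt{n}$ for $\xi$ on that sphere, one has $|c(\xi-\zeta)|\ll n^{-1/2}$ outside a density-zero set of $n$ (those for which some $\xi$ with $|\xi|^2=n$ is nearly orthogonal to $\zeta$), and this $n^{-1/2}$ decay is what kills the numerator. Your proposal never isolates this mechanism; the ``shell-intersection count'' you flag as the principal obstacle is really about excluding the density-zero set of near-orthogonal configurations, which is a soft lattice-point counting statement and does not require the diophantine hypothesis on $a^4$ (the cited theorem of \cite{RU} holds for general irrational unimodular rectangular lattices). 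In short, replace the hunt for a lower bound on $n-\lambda_n$ with the observation that $|c(\xi)|\,|c(\xi-\zeta)|\ll (n-\lambda_n)^{-1}n^{-1/2}$ for the on-shell terms and with a controlled tail for the rest (this is where a statement like Proposition~\ref{prop:key-prop} or its soft analogue enters, after passing to a full-density subsequence of well-spaced $n$), and the argument closes without touching the quantization equation.
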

\begin{remark}
Although the paper \cite{RU} is solely concerned with the weak coupling regime, i.e. fixed self-adjoint extensions, the theorem holds generally for Green's functions $G_{\lambda_n}$ and any sequence of real numbers $\{\lambda_n\}$ which interlaces with the Laplacian eigenvalues on $\T^2$. A detailed explanation is given in \cite{KurlbergU}, p. 7, Remark 3.
\end{remark}

\subsubsection{Pure momentum modes}
Let us consider the case $\zeta=0$. We rewrite the matrix elements as
\begin{equation}\label{pure momentum matrix element}
\begin{split}
\left\langle Op(e_{0,k})g_\lambda,g_\lambda\right\rangle=&\frac{\sum_{\xi\in\scrL\setminus\{0\}} (\tilde{\xi}/|\xi|)^k|c(\xi)|^2+|c(0)|^2}{\sum_{\xi\in\scrL}|c(\xi)|^2}\\
=&\frac{\sum_{n\in \scrN}\frac{w_{k}(n)}{(n-\lambda)^2}}{\sum_{n\in \scrN}\frac{r(n)}{(n-\lambda)^2}}
\end{split}
\end{equation}
where for $0\neq n\in\scrN$ we define the exponential sum
\begin{equation}
w_{k}(n):=\sum_{\substack{|\xi|^2=n \\ \xi\in\scrL\setminus\{0\}}}\left(\frac{\tilde{\xi}}{|\xi|}\right)^k,
\end{equation}
and for notational convenience  we set $w_k(0):=1$.

\subsection{Pair correlations for values of quadratic
  forms}
In this section we will briefly review a result of Eskin, Margulis and
Mozes \cite{EMM} on the pair correlations of the values of the
quadratic form $Q(k,l)=a^{-2}k^2+a^2 l^2$, $k,l\in \Z$, where
$\gamma=a^4$ is diophantine.  We have the following theorem
(cf. Thm. 1.7 in \cite{EMM}), which we only state in the special case
relevant to the present paper. Note that $\area(\T^2)/4\pi=\pi$.
\begin{thm}\label{EMM} {\bf(Eskin-Margulis-Mozes, 2005)}
Let $\gamma=a^4$ be diophantine and $0\notin(b,c)$. Denote the Laplacian eigenvalues on $\T^2=\R^2/2\pi\scrL_0$ by $\{\lambda_j(\T^2)\}$. 
Then
\be
\lim_{X\to\infty}\frac{\#\{\lambda_j(\T^2),\lambda_k(\T^2)\leq X \mid \lambda_j(\T^2)-\lambda_k(\T^2)\in(b,c)\}}{X}=\pi^2(c-b).
\ee
\end{thm}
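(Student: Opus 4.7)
The plan is to recast the pair-correlation count as a lattice-point counting problem in $\R^4$ for an indefinite quadratic form, and then to invoke equidistribution of unipotent orbits on the homogeneous space of unimodular lattices. To begin, observe that the Laplacian eigenvalues on $\T^2=\R^2/2\pi\scrL_0$ are precisely the values $Q(k,l)=a^{-2}k^2+a^2 l^2$ for $(k,l)\in\Z^2$, so the count in the theorem is, up to harmless boundary contributions,
\begin{equation*}
N(X,b,c):=\#\bigl\{(v,w)\in\Z^2\times\Z^2 : Q(v)\le X,\ Q(w)\le X,\ Q(v)-Q(w)\in(b,c)\bigr\}.
\end{equation*}
The key point is that $F(v,w):=Q(v)-Q(w)$ is an indefinite quadratic form on $\R^4$ of signature $(2,2)$, with $F$ irrational (since $a^4\notin\Q$) and in fact diophantine by hypothesis. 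One is thus in the setting of the quantitative Oppenheim-type problem for values of $F$ in short intervals, with the extra constraint $Q(v),Q(w)\le X$.

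The next step is to rescale and pass to homogeneous dynamics. Writing $v=\sqrt{X}\,v'$, $w=\sqrt{X}\,w'$, one is counting points of the dilated lattice $\sqrt{X}^{-1}\Z^4$ inside the fixed region $\{(v',w'): Q(v'),Q(w')\le 1,\ F(v',w')\in(b/X, c/X)\}$. The volume of this region is, by a direct integration in polar coordinates on each factor of $\R^2$, asymptotically $\pi^2(c-b)/X$ as $X\to\infty$, so that $X\cdot\mathrm{vol}\to \pi^2(c-b)$; this isolates the correct main term. One now identifies the count with an integral of the form
\begin{equation*}
N(X,b,c)=\int_{H\backslash G} \Theta_R(g\Lambda_X)\, d\mu(g)\ +\ (\text{error}),
\end{equation*}
where $G=\SL_4(\R)$, $H$ is the stabilizer of $F$ (a copy of $\mathrm{SO}(2,2)$), $\Lambda_X$ is a translate of $\Z^4$ adapted to the scaling, and $\Theta_R$ is a Siegel-type theta transform averaging the characteristic function of $R$ over orbits.

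The heart of the proof is then to show that the $\mathrm{SO}(2,2)$-orbit of the lattice equidistributes on $\SL_4(\R)/\SL_4(\Z)$. Since $\mathrm{SO}(2,2)$ is generated by unipotent one-parameter subgroups, this is the setting of Ratner's measure-classification theorem together with the Dani--Margulis linearization technique. Ratner yields that any weak-$*$ limit is a translate of Haar measure on the orbit closure of a closed subgroup $L\supset \mathrm{SO}(2,2)$, and any $L$ strictly larger than $\mathrm{SO}(2,2)$ forces the form $F$ to be rationally equivalent to a multiple of an integral form, contradicting the irrationality of $a^4$. The diophantine assumption is used exactly to quantify the escape from these lower-dimensional ``exceptional'' orbit closures and to rule out escape of mass to the cusp via Margulis non-divergence, with an explicit rate sufficient to yield the limit uniformly in $X$. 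Once equidistribution holds, the main term follows from Siegel's mean value formula applied to the transform $\Theta_R$, producing the factor $\pi^2(c-b)$.

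The main obstacle is the uniformity required to take the limit $X\to\infty$ while the target interval $F\in(b/X,c/X)$ simultaneously shrinks: this forces one to work with unbounded test functions on $\SL_4(\R)/\SL_4(\Z)$, for which both the non-divergence estimate and the quantitative version of Dani--Margulis linearization are essential. The diophantine hypothesis on $a^4$ supplies the polynomial decay needed to control contributions from lattices close to those stabilized by intermediate subgroups; this is where the quantitative hypothesis (as opposed to mere irrationality, which would give only $o(X)$ rather than the sharp limit) enters in a decisive way.
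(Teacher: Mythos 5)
The paper does not actually prove this statement; it quotes it directly as Theorem~1.7 of Eskin--Margulis--Mozes (2005), specialized to the binary form $Q(k,l)=a^{-2}k^2+a^2l^2$. So what you have written is not being compared against an in-paper argument but against the EMM source itself, and as a high-level reconstruction of that source it is broadly faithful: the reduction to a signature-$(2,2)$ form $F(v,w)=Q(v)-Q(w)$ on $\R^4$, the rescaling by $\sqrt{X}$, the volume computation yielding $\pi^2(c-b)$ (which is correct, and worth noting that the unimodular change of variables sending $Q$ to the standard Euclidean form makes the polar-coordinate computation immediate), and the appeal to equidistribution on $\SL_4(\R)/\SL_4(\Z)$ via Siegel transforms all track the actual argument.

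Two points in your description are, however, imprecise in ways that matter. First, the real obstruction that distinguishes signature $(2,2)$ from $(p,q)$ with $p\geq 3$, and which forces the diophantine hypothesis, is not ``shrinking intervals and uniformity in $X$'' in the abstract, but the presence of \emph{rational isotropic $2$-planes} of $F$: integer points on or near these planes can contribute too many small values of $F$, and indeed EMM exhibit irrational (non-diophantine) forms of signature $(2,2)$ for which the asymptotic fails. The diophantine condition on $a^4$ is exactly what controls the mass of lattice points clustering along such isotropic subspaces. Second, the dynamical input is not a direct Ratner measure-classification together with Dani--Margulis linearization to identify orbit closures of $\mathrm{SO}(2,2)$. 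Rather, EMM work with the Siegel transform $\widehat{f}(\Lambda)=\sum_{v\in\Lambda}f(v)$ and the function $\alpha(\Lambda)$ (the maximum of $1/\mathrm{covol}(V)$ over primitive subgroups $V\subset\Lambda$); the key technical theorem is a quantitative bound on $\int_K \alpha(a_t k\Lambda)^{\,s}\,dk$ for translates of the maximal compact $K\subset H$, together with the Dani--Margulis equidistribution theorem for such translates. For signature $(2,2)$, $\alpha$ fails to be integrable on $\SL_4(\R)/\SL_4(\Z)$, and the bulk of the EMM paper is a careful decomposition of $\alpha$ by dimension of the responsible subgroup, with the diophantine condition supplying the needed decay for the problematic $2$-dimensional (isotropic) pieces. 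Your outline does not surface either of these, and as written the plan to ``rule out intermediate orbit closures via Ratner plus linearization'' would not by itself deliver the sharp asymptotic here, because the issue is one of non-integrability near the cusp rather than of exceptional orbit closures.
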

The theorem above proves the Berry-Tabor conjecture \cite{BerryTabor} for the pair correlations of the Laplacian eigenvalues on the torus $\T^2$, where $\gamma=a^4$ is diophantine. Recall that the Laplacian eigenvalues are given by the squared norms $(k^2+a^4l^2)/a^2$ and the ordered set of such distinct squared norms is denoted by $\scrN$. 

As in the irrational case the multiplicities of the Laplacian
eigenvalues on the torus are generically $4$, we have for the pair
correlation of the distinct Laplacian eigenvalues, i.e. the set of
norms $\scrN$, 
\begin{equation}
  \label{eq:norm-pair-correlation}
\lim_{X\to\infty}\frac{\{m,n\in\scrN \mid m,n\leq X, \; m-n\in(b,c)\}}{X}
=\frac{\pi^2}{16}(c-b).
\end{equation}
Letting 
$$
\scrN(X) := \{n\in\scrN \mid n\leq X\}
$$
denote the intersection of $\scrN$ and the interval $[0,X]$, we have
the counting asymptotic (``Weyl's law'')
\begin{equation}
  \label{eq:Weyl-law}
|\scrN(X) |
\sim \frac{\pi}{4}X
\end{equation}
as $X\to\infty$. Consequently, we
obtain $$\lim_{X\to\infty}\frac{1}{|\scrN(X)|}\#\{m,n\in\scrN(X) \mid
m-n\in(b,c)\}=\frac{\pi}{4}(c-b).$$ 
We note that the mean spacing is
$4/\pi$ (cf. eq. \eqref{eq:Weyl-law}).


\section{The weak coupling limit --- proof of Theorem
  \ref{MomentumScars}}
We begin by proving the following proposition.
\begin{prop}\label{QEfails}
  Let $\scrL$ be a diophantine rectangular unimodular lattice as
  above\footnote{In particular, $a^{4} \not \in \Q$ is diophantine; cf.
    Definition~\ref{def:lattice-def}.}. There exists 
  a subsequence $\scrN'\subset\scrN$ of full density such that for
  $m\in\scrN'$ and any integer $k$,
\be\label{irrationalpuremode} \bra
  Op(e_{0,k})g_{\lambda_m},g_{\lambda_m} \ket =
  \frac{w_{k}(m)}{r(m)}+o(1) \ee as $m\to\infty$ along
  $\scrN'$.
\end{prop}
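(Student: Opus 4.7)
\medskip
\noindent\textbf{Proof plan.} The starting point is the explicit expression in
equation~\eqref{pure momentum matrix element}. Writing $\delta_m := m - \lambda_m$ and separating the $n=m$ term from the tails
\be
S^{\rm off}(\lambda_m) := \sum_{n\in\scrN,\, n\neq m} \frac{r(n)}{(n-\lambda_m)^2},
\quad
T^{\rm off}_k(\lambda_m) := \sum_{n\in\scrN,\, n\neq m} \frac{w_k(n)}{(n-\lambda_m)^2},
\ee
one obtains
\be
\bra Op(e_{0,k})g_{\lambda_m},g_{\lambda_m}\ket
= \frac{w_k(m) + \delta_m^2\, T^{\rm off}_k(\lambda_m)}{r(m) + \delta_m^2\, S^{\rm off}(\lambda_m)}.
\ee
Because every summand in the definition of $w_k(n)$ has modulus one, $|w_k(n)| \leq r(n)$, hence $|T^{\rm off}_k| \leq S^{\rm off}$. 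Since the multiplicities on a diophantine torus are bounded ($r(n) \leq 4$), the proposition reduces to the single estimate
\be
\delta_m^2 \cdot S^{\rm off}(\lambda_m) = o(1)
\ee
as $m\to\infty$ along a full density subsequence $\scrN'\subset\scrN$.

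Two ingredients are needed. First, an upper bound $\delta_m = O(1/\log m)$ on a full density subset: isolating the singular $n=m$ term in~\eqref{weak coupling quantisation} gives $r(m)/\delta_m = c_\varphi - \widetilde{F}_m(\lambda_m)$, where $\widetilde{F}_m$ denotes the regularized sum over $n\neq m$, and a Weyl-law computation (analogous to the one in \cite{RU}) shows $|\widetilde{F}_m(m)| \asymp \log m$. Second, a lower bound on the nearest-neighbor gap
\be
\Delta_m := \min_{n\in\scrN,\, n\neq m}|n-m|:
\ee
applying the pair correlation asymptotic~\eqref{eq:norm-pair-correlation} to $(b,c)=(-\eta,\eta)$ bounds the relative density of $m\in\scrN(X)$ with $\Delta_m<\eta$ by $\pi\eta/2+o(1)$. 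A Chebyshev-style diagonalization (letting $\eta$ vary slowly) then produces a sequence $\eta_m\to 0$ (e.g.\ $\eta_m = 1/\sqrt{\log m}$) and a full density $\scrN'\subset\scrN$ on which simultaneously $\Delta_m\geq\eta_m$ and $\delta_m = o(\eta_m)$.

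With these in hand, for $m\in\scrN'$ and $n\in\scrN\setminus\{m\}$ one has $|n-\lambda_m| \geq |n-m|-\delta_m \geq |n-m|/2$, so splitting at $|n-m|=1$,
\be
S^{\rm off}(\lambda_m)\;\ll\; \sum_{\substack{n\neq m\\ |n-m|\leq 1}} \frac{1}{(n-m)^2} + \sum_{|n-m|>1} \frac{r(n)}{(n-m)^2}.
\ee
The far sum is $O(1)$ by the Weyl counting~\eqref{eq:Weyl-law}. The near sum is bounded by $C/\eta_m^2$ times the number of $n\in\scrN$ within distance $1$ of $m$, and this cardinality is $O(1)$ on average (and can be kept $O(1)$ along $\scrN'$ by one further diagonalization exploiting pair correlation at scale $1$). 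Therefore $\delta_m^2 S^{\rm off}(\lambda_m) = O(\delta_m^2/\eta_m^2) \to 0$, which yields the claim.

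The main obstacle is step (ii): translating the average statement in~\eqref{eq:norm-pair-correlation} into an individual pointwise lower bound $\Delta_m\geq\eta_m$ valid on a full density subsequence, which requires a careful diagonalization of the decay rate of $\eta$. A secondary technicality is controlling potential local clusters of Laplace eigenvalues near $m$; this too is handled by a further application of pair correlation at scale~$1$, since its linearity in the window width ensures uniformity after discarding a density-zero subset.
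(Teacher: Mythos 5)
Your proposal follows the paper's strategy quite closely: the same separation of the $n=m$ term, the reduction to $\delta_m^2\,S^{\mathrm{off}}(\lambda_m)=o(1)$, and the same two inputs (clumping for $\delta_m$, EMM pair correlation for nearest-neighbour gaps, combined via Chebyshev). Two of your intermediate claims, however, are stated more strongly than they can be proved, and one of them leaves a genuine gap.

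First, the assertion $\delta_m=O(1/\log m)$ on a full density subsequence is a bit too strong. The clumping result that is actually available (Theorem~\ref{clumping}, from \cite{RU2}) gives $\delta_m\ll f(m)/\log m$ on a density-one set for any prescribed $f\to\infty$; this is what one can extract from a lower bound on the regularized sum $\widetilde F_m$, since that sum is only known to be $\gg\log m$ on average and one pays a slowly growing factor when passing to a density-one pointwise statement. This is a minor overstatement since any slowly growing $f$ suffices for your conclusion.

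Second, and more seriously: your claim that the far sum $\sum_{|n-m|>1} r(n)/(n-m)^2$ is ``$O(1)$ by the Weyl counting'' is false as a pointwise statement. Weyl's law bounds the number of $n\in\scrN$ in $[m+k,m+k+1)$ only with an error term of order $\sqrt m$, so for small $k$ the local count can be much larger than $O(1)$, and summing against $1/k^2$ need not give $O(1)$ for every $m$. The far sum is $O(1)$ only on average over $m\in\scrN(x)$. In the paper this is exactly what Lemma~\ref{lem:difference-bigger-than-three} (an $L^2$ bound on the short-interval counting function $M(k)$, deduced from EMM) and Proposition~\ref{prop:key-prop} are for: one averages $\sum_{n\neq m}1/(n-m)^2$ over $m$, obtains $\ll x(\log x)^{2-2\epsilon}$, and then Chebyshev produces a full density set on which $\sum_{n\neq m}1/(n-m)^2\ll(\log m)^{2-\epsilon}$. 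That pointwise bound of size $(\log m)^{2-\epsilon}$, not $O(1)$, is what is then multiplied by $\delta_m^2\ll f(m)^2/\log^2 m$ to give $o(1)$. Your argument can be repaired by including the far sum in the Chebyshev/diagonalization step rather than only the near-neighbour cardinality, but as written the $O(1)$ claim is a gap. A smaller analogous remark applies to the near sum: the number of $n$ within distance $1$ of $m$ is not $O(1)$ pointwise; it can only be bounded by a slowly growing function on a full density set, which is again the content of the $L^2$/Chebyshev step.
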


Before  giving the proof of Proposition \ref{QEfails} we recall
the following bound from \cite{RU2}; it shows that, in the {\em weak}
coupling regime, the new 
eigenvalues of the scatterer and the eigenvalues of the Laplacian
generically ``clump'' together.
\begin{thm}\label{clumping}
Let $\scrL$ be an irrational lattice as above. 
Given any increasing function $f$ such that $f(m)\to\infty$ as
$m\to\infty$ along $\scrN$ there exists a density
one subsequence $\scrN''\subset\scrN$ such that for all $m \in
\scrN''$, 
\be
0 < m-\lambda_m \ll \frac{f(m)}{\log m}
\ee
\end{thm}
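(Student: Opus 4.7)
My approach is to extract the clumping directly from the defining equation \eqref{weak coupling quantisation}. First I would set $\delta := m - \lambda_m > 0$ and isolate the $n = m$ term, obtaining
\[
\frac{r(m)}{\delta} = c_\varphi + \frac{r(m)m}{m^2+1} - H_m(\delta), \qquad H_m(\delta) := \sum_{n \in \scrN \setminus \{m\}} r(n)\!\left(\frac{1}{n - m + \delta} - \frac{n}{n^2+1}\right),
\]
where $c_\varphi = \tan(\varphi/2)\sum_{n} r(n)/(n^2+1)$ is a fixed constant in the weak-coupling regime. Since the multiplicity $r(m) \leq 4$ uniformly (by unique representation under the irrational form $(k^2 + a^4 l^2)/a^2$), the desired bound $\delta \ll f(m)/\log m$ reduces to showing $|H_m(\delta)| \gg \log m / f(m)$ on a density-one subsequence.

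The key computation is then the asymptotic $H_m(0) = -\pi \log m + O(1)$ on a density-one set. Using the identity
\[
\frac{1}{n-m} - \frac{n}{n^2+1} = \frac{1 + mn}{(n-m)(n^2+1)},
\]
I would split $\scrN \setminus \{m\}$ into the three ranges $(0, m/2]$, $(m/2, 3m/2]$, $(3m/2, \infty)$. The large range contributes $O(1)$ by a partial-fraction estimate against the Weyl count $M(t) := \#\{\xi \in \scrL : |\xi|^2 \leq t\} = \pi t + o(t)$. The near range contributes $O(1)$ plus the signed sum $\sum_{m/2 < n \leq 3m/2,\, n \neq m} r(n)/(n-m)$, which must be controlled separately. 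The small range is the source of the logarithm: there the regularizer $-n/(n^2+1) \sim -1/n$ dominates, and Abel summation gives
\[
\sum_{n \in \scrN \cap [1, m/2]} r(n)\!\left(\frac{1}{n-m} - \frac{n}{n^2+1}\right) = -\sum_{n \in \scrN \cap [1, m/2]} \frac{r(n)}{n} + O(1) = -\pi \log m + O(1).
\]

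To control the remaining near-diagonal signed sum along a full-density subsequence, I would invoke the Eskin--Margulis--Mozes pair correlation (Theorem \ref{EMM}): for any $f(m) \to \infty$, the set $\scrN''$ of $m \in \scrN$ whose nearest neighbor in $\scrN$ lies at distance $\geq 1/f(m)$ has full density in $\scrN$. On $\scrN''$ the near-diagonal signed sum is $O(f(m))$, and a first-order Taylor expansion bounds $H_m(\delta) - H_m(0)$ by $O(\delta f(m)^2)$, negligible once $\delta \ll 1/\log m$. Substituting into the isolated equation yields $\delta = r(m)/(\pi \log m + O(f(m))) \ll f(m)/\log m$ on $\scrN''$. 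The main obstacle is precisely this control of the near-diagonal signed sum on a density-one subsequence: it is pointwise $O(1)$ only thanks to cancellation between $n$ just above and just below $m$, which breaks down at the rare $m$ with anomalously close neighbors. The Eskin--Margulis--Mozes theorem, which crucially uses the diophantine hypothesis on $a^4$, is the essential quantitative input that lets one excise this exceptional density-zero set.
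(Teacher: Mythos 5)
The paper does not prove Theorem~\ref{clumping} --- it is quoted from \cite{RU2} --- so there is no in-paper argument to compare against; I assess your attempt on its own merits. Your overall strategy (isolate the $n=m$ term in the spectral equation, recognize that the regularized tail contributes $-\pi\log m$, and control the near-diagonal contribution on a density-one set via the Eskin--Margulis--Mozes pair correlation) is the right one, but two steps have real gaps. First, the claim that the near-diagonal signed sum is $O(f(m))$ on $\scrN''$ does not follow from the nearest-neighbor condition alone: if the nearest neighbor is at distance $\geq 1/f(m)$ then so are all neighbors, but $\sum_{0<|n-m|\leq 1} r(n)/|n-m|$ is then only bounded by $4Kf(m)$ where $K = |\{n\in\scrN : 0<|n-m|\leq 1\}|$, and you never bound $K$. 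This requires a second-moment/Chebyshev argument with the pair correlation (in the spirit of Lemma~\ref{short-interval-multiplicity-bound}) to show $K=O(1)$ on a density-one subsequence; without it the step fails.

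Second, the concluding step is circular as written: you invoke ``negligible once $\delta\ll 1/\log m$'' to discard the Taylor remainder $H_m(\delta)-H_m(0)$, but $\delta = O(f(m)/\log m)$ is precisely what is to be proved, and no a priori bound or bootstrap is supplied. The standard way to avoid this is not to expand $H_m$ around $\delta=0$ at all, but to exploit monotonicity of $G(\lambda)=\sum_n r(n)\bigl(\frac{1}{n-\lambda}-\frac{n}{n^2+1}\bigr)$ on $(m_-,m)$: since $\lambda_m > m-\delta_0$ iff $G(m-\delta_0)<c_\varphi$, one bounds $G(m-\delta_0)$ directly at the fixed scale $\delta_0 = Cf(m)/\log m$; then the dangerous terms $r(n)/(n-m+\delta_0)$ for $n>m$ are uniformly $\leq 4/\delta_0$ and the remaining estimate is exactly the bounded-$K$ count from the first gap. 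Relatedly, your Taylor remainder bound $O(\delta f(m)^2)$ is too optimistic: the full sum $\sum_{n\neq m}1/(n-m)^2$ is only $O((\log m)^{2-\epsilon})$ on a density-one set (cf.\ Proposition~\ref{prop:key-prop}), not $O(f(m)^2)$, and since the theorem must hold for arbitrarily slowly growing $f$ (e.g.\ $f=\log\log\log$), you cannot absorb the former into the latter.
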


The following key  Lemma will allow us  to ``circumvent'' the lack of
uniformity in the size of the interval $(b,c)$ in Theorem~\ref{EMM}.
\begin{lem}
\label{lem:difference-bigger-than-three}
For $A \geq 3$ we have
$$
\sum_{\substack{ m,n \in \scrN(x) \\ |m-n|> A}}
\frac{1}{|m-n|^{2}}
\ll \frac{x}{A}
$$  
\end{lem}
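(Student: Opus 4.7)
The proof is a dyadic decomposition in $|m-n|$, reducing the claim to a uniform pair-count bound. Set
\[ N_T := \#\{(m,n) \in \scrN(x)^2 : T \le |m-n| < 2T\}. \]
Splitting the outer sum by $|m-n| \in [A\cdot 2^j, A\cdot 2^{j+1})$ yields
\[ \sum_{\substack{m,n \in \scrN(x)\\|m-n|>A}}\frac{1}{(m-n)^2} \ll \sum_{j\ge 0} \frac{N_{A \cdot 2^j}}{(A\cdot 2^j)^2}. \]
If one establishes $N_T \ll xT$ uniformly for $T \in [A,x]$, the right-hand side is a geometric series in $2^{-j}$ totaling $O(x/A)$, which is the claim.

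For $N_T$, the diophantine hypothesis on $a^4$ ensures that each $n \in \scrN$ has at most four preimages under $v \mapsto |v|^2$, so
\[ N_T \ll \#\{(u,v) \in \scrL \times \scrL : |u|^2, |v|^2 \le x,\ T \le \bigl||u|^2 - |v|^2\bigr| < 2T\}. \]
For each fixed $u$, the admissible $v$ lie in an annulus of area $\asymp T$ and perimeter $\asymp |u|$; the standard ``area plus perimeter'' estimate gives $\ll T + |u|$ lattice points, which on summing over $u$ with $|u|^2 \le x$ yields $N_T \ll xT + x^{3/2}$. This suffices when $T \ge \sqrt{x}$, but leaves a spurious $x^{3/2}$ error in the complementary range.

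The main obstacle is thus the regime $T \in [A, \sqrt{x}]$, where the perimeter error must be eliminated through cancellation across $u \in \scrL$. I would replace the sharp indicator by a smooth compactly supported cutoff $\phi(\cdot/T)$ and apply Poisson summation to $\sum_{v \in \scrL} \phi((|u|^2 - |v|^2)/T)$: the zero-frequency Fourier term contributes precisely the area of the annulus---$\ll T$ uniformly in $u$, free of any boundary loss---while the non-zero frequencies give oscillatory integrals whose rapid decay, controlled via the smoothness of $\phi$ and the diophantine property of $a^4$ (keeping denominators in the resulting exponential sums away from zero), yields only a bounded total error. This produces $N_T \ll xT$ uniformly in $T \in [A,x]$, and assembling the dyadic bound completes the proof. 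The crucial point is that the argument bypasses any uniform version of Theorem~\ref{EMM}; instead it uses the geometric lattice-point count together with the diophantine property directly.
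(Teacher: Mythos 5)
Your dyadic reduction is sound: the lemma does follow once one has the uniform bound $N_T \ll xT$ on the number of pairs $(m,n)\in\scrN(x)^2$ with $T\le |m-n|<2T$, and your geometric annulus count correctly yields $N_T \ll xT + x^{3/2}$, which handles $T\gtrsim\sqrt{x}$. The gap is exactly where you flag it: for $T$ of bounded size (which is the regime that matters when $A$ is small, e.g.\ $A=3$), the bound $N_T\ll xT$ is not an elementary lattice-point estimate but is essentially the content of the pair-correlation Theorem~\ref{EMM}. Your proposed remedy --- smoothing the indicator, Poisson summation, and an appeal to the diophantine property of $a^4$ to suppress the nonzero frequencies --- is not carried out, and the assertion that the oscillatory terms contribute ``only a bounded total error'' is unjustified. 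As far as is known, one cannot recover a pair-correlation upper bound of this strength for $a^{-2}k^2+a^2 l^2$ by classical Fourier analysis alone; in Eskin--Margulis--Mozes the diophantine hypothesis enters through equidistribution of unipotent flows, not through elementary exponential-sum cancellation. So your sketch would amount to re-proving a form of Theorem~\ref{EMM}, which it does not do.

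The paper sidesteps the uniformity problem with a different device. It introduces the unit-interval counting function $M(k):=|\scrN\cap[k,k+1]|$ and applies Theorem~\ref{EMM} \emph{once}, with the fixed window $(b,c)=(-1,1)$, to get the $L^2$ bound $\sum_{k\le T}M(k)^2\ll T$. Cauchy--Schwarz then gives $\sum_{k\le T}M(k)M(k+l)\ll T$ \emph{uniformly} in the shift $l\ll T$, which is precisely the uniform pair count at separation $\approx l$ that your argument needs; summing $l^{-2}$ over $l\ge A$ gives $\ll x/A$. The technical step you are missing is this bootstrap: one fixed-scale application of the pair-correlation theorem, upgraded by Cauchy--Schwarz to all shifts, rather than a direct attack on thin-annulus counts.
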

\begin{proof}
Given an integer $k \geq 0$, define
$$
M(k) := |\{ n \in \scrN : n \in [k,k+1]  \}|
$$
We begin by deducing an $L^{2}$ bound on $M(k)$ using Theorem~\ref{EMM}:
\begin{multline*}
\sum_{k \leq T} M(k)^{2} =
\sum_{k \leq T}
|\{ m,n \in \scrN : m,n \in [k,k+1]  \}|
\\ \leq
|\{ m,n \in \scrN : m,n \leq T+1, m-n \in [-1,1]  \}|
\end{multline*}
which,  by Theorem~\ref{EMM}, is
$$
\ll
2(T+1) + (T+1) \ll T
$$
(note that we include pairs $m,n \leq T+1$ for which $m=n$; this gives
rise to the additional $T+1$ term.)

Using Cauchy-Schwarz, we now find that for $l \ll T$, we have
\begin{equation}
  \label{eq:shift-bound}
\sum_{k \leq T} M(k)M(k+l) \leq
\left(\sum_{k \leq T} M(k)^{2} \right)^{1/2} \cdot
\left( \sum_{k \leq T+l} M(k)^{2}  \right)^{1/2}
\ll T^{1/2} \cdot T^{1/2} = T
\end{equation}

We may now conclude the proof:
$$
\sum_{\substack{ m,n \in \scrN(x) \\ |m-n|> A}}
\frac{1}{|m-n|^{2}}
\ll
\sum_{k = A}^{x}
\frac{|\{m,n \in \scrN(x) : m < n, n-m \in [k,k+1] \}}{k^{2}}
$$
$$
\leq
\sum_{k = A}^{x}
\frac{1}{k^{2}}  
\sum_{l \leq x } M(l) \cdot (M(l+k)+M(l+k+1)  )
$$
which, by using (\ref{eq:shift-bound}), is 
$$
\ll
\sum_{k = A}^{x}
\frac{x}{k^{2}}  \ll \frac{x}{A}
$$

\end{proof}

We can now prove the following key estimate.
\begin{prop}
\label{prop:key-prop}
There exists a subsequence $\scrN_{1}\subset\scrN$ of full density such
that for $m\in\scrN_{1}$ 
\begin{equation}
  \label{eq:diff-squared-sum}
\sum_{ n\in\scrN, \; n\neq m }\frac{1}{(n-m)^2} = O((\log m)^{2-\epsilon}).
\end{equation}
\end{prop}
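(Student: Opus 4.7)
The plan is to split $S(m) := \sum_{n\in\scrN,\, n\neq m}\frac{1}{(n-m)^2}$ into a far part $S_{\mathrm{far}}(m) := \sum_{|n-m|>3}\frac{1}{(n-m)^2}$ and a close part $S_{\mathrm{close}}(m) := \sum_{0<|n-m|\le 3}\frac{1}{(n-m)^2}$, and to handle each by a Markov/averaging argument.

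For the far part, Lemma~\ref{lem:difference-bigger-than-three} directly gives $\sum_{m\in\scrN(X)} S_{\mathrm{far}}(m) \ll X$ (the contribution from $n\in\scrN\setminus\scrN(X)$ is easily seen to be $O(X)$ by comparison with a geometric tail in $n-m$), so Markov at threshold $(\log X)^{2-\epsilon}$ produces a full-density subset of $\scrN$ on which $S_{\mathrm{far}}(m) \le (\log m)^{2-\epsilon}$.

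The close part is the main obstacle, since a single very close neighbor already makes $S_{\mathrm{close}}(m)$ large. I would discard the outlier set $\scrB(X) := \{m\in\scrN(X) : \delta(m) < \delta_0(X)\}$, where $\delta(m) := \min_{n\neq m}|n-m|$ and $\delta_0(X)\to 0$ is chosen by an exhaustion of Theorem~\ref{EMM}: pointwise EMM provides, for each integer $k\ge 1$, a threshold $X_k$ with
\[
\mu(X,1/k) := \#\{(m,n)\in\scrN(X)^2 : 0<|n-m|\le 1/k\} \le C\,X/k \quad\text{for } X\ge X_k.
\]
Taking $k(X) := \max\{k : X_k \le X\}\to\infty$, and setting $J(X) := \min\bigl(k(X),\,\lceil(\log X)^{1-\epsilon}\rceil\bigr)$, $\delta_0(X) := 1/J(X)$, I obtain simultaneously $|\scrB(X)|\le \mu(X,\delta_0(X)) \ll X/J(X) = o(X)$ and $\delta_0(X)^{-2} = J(X)^2 \le (\log X)^{2-2\epsilon}$. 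For $m\notin\scrB(X)$ every term in $S_{\mathrm{close}}(m)$ is bounded by $\delta_0(X)^{-2}$, so $S_{\mathrm{close}}(m) \le N_m\,\delta_0(X)^{-2}$ with $N_m := \#\{n\in\scrN : 0<|n-m|\le 3\}$. The $L^2$ estimate $\sum_{k\le T}M(k)^2 \ll T$ from the proof of Lemma~\ref{lem:difference-bigger-than-three}, combined with a shifted-bucket Cauchy--Schwarz, yields $\sum_{m\in\scrN(X)} N_m \ll X$, and hence
\[
\sum_{m\in\scrN(X)\setminus\scrB(X)} S_{\mathrm{close}}(m) \ll X(\log X)^{2-2\epsilon}.
\]
A Markov step at threshold $(\log X)^{2-\epsilon}$ then excludes a further $\ll X(\log X)^{-\epsilon} = o(X)$ elements, and a standard dyadic-in-$X$ argument (taking $X=2^{j}$ and intersecting) assembles the per-$X$ full-density sets into a single full density $\scrN_1\subset\scrN$ on which $S(m) = O((\log m)^{2-\epsilon})$.

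The principal obstacle is the absence of an explicit rate in Theorem~\ref{EMM}, which forces the non-constructive exhaustion used above to produce $\delta_0(X)$; the reason one can nevertheless reach the target bound is that the two competing demands on $\delta_0(X)$---small enough to make the outlier set negligible, yet bounded below by $(\log X)^{-1+\epsilon}$ so that $\delta_0(X)^{-2}$ stays below the desired power of $\log X$---are simultaneously satisfiable precisely because pointwise EMM holds at every fixed scale $1/k$.
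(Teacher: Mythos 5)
Your proof is correct and follows the same overall strategy as the paper: the small gaps to nearest neighbours are identified as the obstruction, the set of $m$ with a very close neighbour is shown to be negligible via the Eskin--Margulis--Mozes pair-correlation asymptotic, the remaining sum is bounded on average, and a Chebyshev/Markov step then delivers a full-density good set. The mechanics differ at two points, both of which are valid.  For the lack of an explicit rate in Theorem~\ref{EMM}, you construct a uniform threshold $\delta_0(X)=1/J(X)$ by exhaustion, taking the largest $k$ for which the pair-correlation asymptotic has ``kicked in'' at height $X$ and capping it at $\lceil(\log X)^{1-\epsilon}\rceil$; the paper instead fixes $G(m)=(\log m)^{-1+\epsilon}$ outright and proves full density of $\scrN_0$ by contradiction, comparing an assumed positive density $\eta$ against the pair correlation in the fixed window of width $\eta/\pi$ for $m\geq x^{1/4}$.  (Both devices achieve the same thing; the contradiction is shorter, your exhaustion is more explicit, and in either case the resulting threshold is $\gg (\log m)^{-1+\epsilon}$, which is also needed downstream in the proof of Proposition~\ref{QEfails}.)  You also split the sum at $|n-m|=3$ so that the far range is handled by reusing Lemma~\ref{lem:difference-bigger-than-three} directly, and bound the close range by $N_m\,\delta_0(X)^{-2}$ with $\sum_m N_m\ll X$; the paper instead splits at $|n-m|=1$ and $2x$ and re-derives the far-range estimate by partial summation (its stated bound $\ll\log x$ there appears to be a typographical slip for $\ll x$, but this does not affect the argument, as either is dominated by the $x(\log x)^{2-2\epsilon}$ term).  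The final dyadic-in-$X$ assembly you invoke is the standard one and closes the argument.
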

\begin{proof}
  Let $G(m)=(\log m)^{-1+\epsilon}$ for a small fixed
  $\epsilon\in(0,1)$, and denote by $m_-$, $m_+$ the nearest neighbours to the left and right of $m\in\scrN$.
  We claim that the subsequence 
$$
\scrN_0=\{m\in  \scrN \mid |m-m_-|,|m-m_+| \geq G(m)\}
$$ 
is of full density in  $\scrN$. Let us assume for a contradiction that
the sequence 
$$
\scrN_0' =\{ m\in\scrN \mid |m-m_-|\,\text{or}\,|m-m_+|<G(m)\}
$$ 
is
of non-zero density, i.e., that for some $\eta>0$,
\be
\label{lower_pos} 
|\scrN_0'(x)|\geq  \eta  |\scrN(x)|
\ee 
holds for a sequence of values of $x$ tending to infinity. Recall
that $N(x)\sim 
\frac{\pi}{4}x$, as 
$x\to\infty$, since $\area(\T^2)=4\pi^2$ and the multiplicity is
generically $4$.
Using Theorem~\ref{EMM}, we thus find that as $x\to\infty$,
\be
\begin{split}
&\frac{1}{|\scrN(x)|}\#\{ m\in \scrN_0' \mid m\leq x \}\\
\leq \; &\frac{1}{|\scrN(x)|}
\#\{ m,n \in\scrN \mid |m-n|< G(m), \quad m\neq n, \quad m,n\leq x\}\\
\leq \; &\frac{1}{|\scrN(x)|}
\#\{ m,n \in\scrN \mid |m-n|\leq\frac{\eta}{\pi}, \quad m\neq n, \quad x^{1/4}\leq m,n\leq x\}\\ 
&\hspace{70mm}+O(x^{-1/2})\\
\to \; &\frac{\eta}{2}
\end{split}
\ee 
which leads to a contradiction to eq. \eqref{lower_pos}.  

Next we estimate the sum on the LHS of \eqref{eq:diff-squared-sum}. 
We first note that for
$m\in\scrN(x)$ and $x$ large we have 
\be
\begin{split}
\sum_{\substack{n\in\scrN, n\neq m}}\frac{1}{(m-n)^2}=
&\sum_{\substack{n\in\scrN(2x), n\neq    m}}
\frac{1}{(m-n)^2}+\sum_{\substack{  n\in\scrN,n>2x, n\neq m }}\frac{1}{(m-n)^2}
\\ 
=&\sum_{\substack{ n\in\scrN(2x), n\neq m }}
\frac{1}{(m-n)^2}+O\left(\frac{1}{x}\right)
\end{split}
\ee
where we used in the last line that $m\leq x$ and $n>2x$, hence
$n-m>n/2$ and the bound on the second sum follows from Weyl's law
(see \eqref{eq:Weyl-law}) and partial summation.

Next we show that there exists a density one subsequence 
$ \scrN_{1} \subset \scrN_{0}$
such that for all $m \in \scrN_{1}(x)$,
$$
\sum_{\substack{n\in\scrN(2x), n\neq m}}\frac{1}{(m-n)^2} \ll (\log m)^{2-\epsilon}.
$$
We have 
\begin{multline}
\sum_{m \in \scrN_0(x)}
\sum_{\substack{n\in\scrN(2x)}}
\frac{1}{(m-n)^2} 
\leq
\sum_{m\in\scrN(x)}\sum_{\substack{n\in\scrN(2x) \\ |n-m|\geq  G(m)}}
\frac{1}{(m-n)^2} 
\\
=\sum_{m\in\scrN(x)}\sum_{\substack{n\in\scrN(2x) \\ |n-m|\in[G(m),1]}}\frac{1}{(m-n)^2} 
+\sum_{m\in\scrN(x)}\sum_{\substack{n\in\scrN(2x) \\ |n-m|>1}}\frac{1}{(m-n)^2}
\end{multline}
We estimate the second sum by
\be
\begin{split}
&\sum_{m\in\scrN(x)}\sum_{\substack{n\in\scrN(2x) \\ |n-m|>1}}\frac{1}{(m-n)^2}\\
&\leq \sum_{k\leq 2x} \frac{1}{k^2}\#\{m\in\scrN(x),n\in\scrN(2x) \mid |m-n|\in[k,k+1)\}\\
&=:\sum_{k\leq 2x} \frac{c(k)}{k^2}\ll \log x
\end{split}
\ee
where the logarithmic bound follows from
$$
\sum_{k\leq 2x}c(k)
\leq |\{ m,n \in \scrN(2x) : |m-n| \leq 2x+2 \} |    \leq
|\scrN(2x)|^{2} 
\ll x^2
$$ 
together with summation by parts.

For the first sum we have, by Theorem~\ref{EMM},
\be
\begin{split}
 & \sum_{m\in\scrN(x)}\sum_{\substack{n\in\scrN(2x) \\
     |n-m|\in[G(m),1]}}\frac{1}{(m-n)^2}\\ 
 \ll \; & \frac{1}{G(x)^2}\#\{m\in\scrN(x),n\in\scrN(2x) \mid
 |m-n|\in (0,1]\} \\ 
 \ll \; & \frac{x}{G(x)^2} \ll x(\log x)^{2-2\epsilon}.
\end{split}
\ee

Now, let 
$$
F(m)=\sum_{\substack{n\in\scrN(2x) \\ |n-m|\geq
    G(m)}}\frac{1}{(m-n)^2}.
$$ 
From the estimates above we have, for fixed $\delta\in(0,1)$,
\be
\sum_{m\in\scrN_0(x),m\geq x^\delta}F(m)\ll x(\log x)^{2-2\epsilon}
\ee

Letting $T(m)=(\log m)^{2-\epsilon}$ and using Chebyshev's inequality we
find that 
\begin{multline}
\#\{m\in\scrN_0(x) \mid F(m)\geq T(m), m\geq x^\delta\} \\
\ll \; T(x)^{-1}\sum_{m\in\scrN_{0}(x), m\geq x^\delta} F(m) 
\ll \; x (\log x)^{2-2\epsilon}/T(x)=x/(\log x)^\epsilon
\end{multline}
where we have used that $T(m)\asymp T(x)$ for $m\in[x^\delta,x]$. It
follows that 
$F(m)<T(m)$ is a density one condition inside $\scrN_{0}(x)$ thereby
concluding the proof.
\end{proof}

\subsection{Proof of Proposition \ref{QEfails}}
\begin{proof}
Fix some integer $k\neq0$. In order to construct the full density
subsequence we will use the result about ``clumping'' of the spectrum as
stated in Theorem \ref{clumping}. 
In what follows assume that $f$ is as in Theorem \ref{clumping}.
By Theorem~\ref{clumping}, we have for $m\in\scrN''$ that $|m
-\lambda_{m}|^{2} = O( f(m)^{2}/\log^{2}m)$, and hence
\be
\bra Op(e_{0,k})g_\lambda,g_\lambda\ket = \frac{w_{k}(m)+O((\log
  m)^{-2}f(m)^2)\sum_{\substack{n\neq m \\
      n\in\scrN}}w_{
      k}(n)(n-\lambda_m)^{-2}}{r(m)+O((\log
  m)^{-2}f(m)^2)\sum_{\substack{n\neq
      m\\n\in\scrN}}r(n)(n-\lambda_m)^{-2}} 
\ee
Let $\scrN'=\scrN_1\cap\scrN''$, with $\scrN_1$ as in
Proposition~\ref{prop:key-prop}. For $m\in\scrN'$, by 
the proof of Proposition~\ref{prop:key-prop} (in particular note that
$|m-m_-|,|m-m_+| \geq G(m) = (\log m)^{-1+\epsilon}$ holds for $m \in
\scrN_1 \subset 
\scrN_0$), if we take $f(m) = \log \log m$, then
\be
\sum_{\substack{n\neq m \\
    n\in\scrN}}\frac{|w_{k}(n)|}{(n-\lambda_m)^2} \ll
\sum_{\substack{n\neq m, \; n\in\scrN \\ |n-m| \geq
    G(m)}}\frac{1}{(n-m)^2}=O((\log m)^{2-\epsilon}) 
\ee
and
it follows 
that
\begin{multline*}
\bra Op(e_{0,k})g_\lambda,g_\lambda\ket=
\frac{w_{k}(m) +O((\log m)^{-\epsilon}) \cdot f(m)^2}
{r(m) +O((\log m)^{-\epsilon}) \cdot f(m)^2}
=\frac{w_{k}(m)+o(1)}{r(m)+o(1)}
\\
=\frac{w_{k}(m)}{r(m)}+o(1)
\end{multline*} 
as $m\to\infty$
(note that $|w_{k}(m)| \leq r(m) \leq 4$.) So the identity
\eqref{irrationalpuremode} follows.  
\end{proof}

Proposition \ref{QEfails} easily gives a classification of the quantum
limits which may arise within the sequence $\scrN'$. We are interested
in the sequence $d\mu_{\lambda_m}$, $m\in\tilde{\scrN}$, where
$\tilde{\scrN}$ denotes the intersection of the subsequences in
Theorems \ref{mixed} and \ref{clumping}, so $\tilde{\scrN}$ is of full
density. We would like to determine the quantum limits of this
sequence, i.e. the limit points in the weak-* topology.

From Theorem \ref{mixed} we know that along $\tilde{\scrN}$ the limit measures must be
flat, or equidistributed, 
  in position.  Moreover, from Proposition
\ref{QEfails} we also know that the matrix elements of pure momentum
observables for the eigenfunction $g_{\lambda_m}$,
$m\in\tilde{\scrN}$, tend to stay away from zero, because for an
irrational lattice $\scrL$ the multiplicity $r(n)$ is
bounded. The intuition is that the sequence $d\mu_{\lambda_m}$,
$m\in\tilde{\scrN}$, becomes localized in momentum in the
semiclassical limit. Theorem \ref{MomentumScars} determines the set of such
localized quantum limits.

\subsection{Proof of Theorem \ref{MomentumScars}}
Consider the classical observable $$a=\sum_{\zeta\in\scrL, k\in\Z}\hat{a}(\zeta,k)e_{\zeta,k}.$$ Let $m\in \tilde{\scrN}$. By a standard diagonalization argument (see section 4 in \cite{KurlbergU}) it suffices to prove the result for the trigonometric polynomials $$P_J=\sum_{\substack{\zeta\in\scrL, k\in\Z \\ |\zeta|,|k|\leq J}}\hat{a}(\zeta,k)e_{\zeta,k}.$$

It follows from \eqref{irrationalpuremode} and Theorem \ref{mixed} that 
\be
\bra Op(P_J)g_{\lambda_m},g_{\lambda_m} \ket = \frac{1}{r(m)}\sum_{2|k}\hat{a}(0,k)w_{k}(m)+o(1)
\ee
as $m\to\infty$. For given $m\in\scrN$ let $\theta_m\in[0,\pi/2]$ be the phase angle of the lattice point on the upper right arc of the circle $|\xi|^2=m$, i.e. $\hat{\xi}=m^{1/2}e^{\i\theta_m}$ for some $\xi\in\scrL$. Since $\scrL$ is irrational we have 
$$\frac{w_{k}(m)}{r(m)}=
\begin{cases}
\cos(k\theta_m), \quad 2|k,\\
\\
0, \quad \text{otherwise.}
\end{cases}$$ 

We have the following Lemma.

\begin{lem}
The sequence of angles $\{\theta_m\}_{m\in\tilde{\scrN}}$ is dense in $[0,\pi/2]$.
\end{lem}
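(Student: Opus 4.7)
The plan is to reduce the density assertion to the quantitative statement that for every subinterval $I \subset [0,\pi/2]$ of positive length, the set $\{m \in \scrN : \theta_{m} \in I\}$ has positive lower density in $\scrN$. Since $\tilde{\scrN}$ has full density in $\scrN$, its complement $\scrN \setminus \tilde{\scrN}$ is of density zero, so intersection with any positive-density subset remains infinite; in particular infinitely many $m \in \tilde{\scrN}$ satisfy $\theta_{m} \in I$. Letting $I$ shrink to any prescribed $\theta \in [0,\pi/2]$ then produces the required approximating sequence.

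To establish the positive density I would first exploit the assumption $a^{4} \notin \Q$. It makes the map $\Phi: \Z_{>0}^{2} \to \scrN$ defined by $\Phi(k,l) = k^{2}/a^{2} + l^{2}a^{2}$ injective, since $\Phi(k_{1},l_{1}) = \Phi(k_{2},l_{2})$ with $l_{1} \neq l_{2}$ would force $a^{4} = (k_{1}^{2}-k_{2}^{2})/(l_{2}^{2}-l_{1}^{2}) \in \Q$. For each $(k,l) \in \Z_{>0}^{2}$ the lattice point $(k/a, la) \in \scrL$ lies in the open first quadrant with argument $\arctan(la^{2}/k) \in (0,\pi/2)$, and by the convention in the statement this argument coincides with $\theta_{\Phi(k,l)}$. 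Hence counting $m \in \scrN(X)$ with $\theta_{m}$ in a subinterval $I = (\alpha,\beta) \subset (0,\pi/2)$ reduces to counting pairs $(k,l) \in \Z^{2}_{>0}$ with $k^{2}/a^{2} + l^{2}a^{2} \leq X$ and $la^{2}/k \in (\tan\alpha, \tan\beta)$.

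The scaling $(k,l) = \sqrt{X}(u,v)$ turns this region into a fixed open subset $R_{I} \subset \R_{>0}^{2}$ of positive area — an angular sector of a quadrant of an ellipse. A standard Gauss lattice-point count then gives
\be
\#\{(k,l) \in \Z_{>0}^{2} : k^{2}/a^{2}+l^{2}a^{2} \leq X,\; \arctan(la^{2}/k) \in I\} = \Area(R_{I}) \cdot X + O(\sqrt{X}),
\ee
while $|\scrN(X)| \sim (\pi/4) X$ by Weyl's law (see \eqref{eq:Weyl-law}), yielding positive density $(4/\pi)\Area(R_{I}) > 0$. Every $\theta \in [0,\pi/2]$ has arbitrarily small subneighborhoods $I \cap (0,\pi/2)$ of positive length, so the argument applies at the endpoints as well, and density on $[0,\pi/2]$ follows. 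I do not anticipate any real obstacle here: once the irrationality of $a^{4}$ is used to match $\theta_{m}$ to the angle of the unique lattice point in the open first quadrant on the circle $|\xi|^{2}=m$, the remainder is a routine sector-counting argument together with the full-density transfer from $\scrN$ to $\tilde{\scrN}$.
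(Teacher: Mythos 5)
Your argument is correct and follows the same route as the paper: reduce density to positive proportion in any subinterval, reinterpret $\{m \in \scrN(x) : \theta_{m} \in I\}$ as a lattice-point count inside a dilated elliptic sector, use the Gauss count and Weyl's law to get the asymptotic proportion, and transfer from $\scrN$ to $\tilde{\scrN}$ via the full-density property. The only difference is that you spell out the injectivity of $(k,l) \mapsto k^{2}/a^{2} + l^{2}a^{2}$ coming from $a^{4}\notin\Q$, which the paper leaves implicit in the identification of $\{m : \theta_{m}\in I\}$ with first-quadrant lattice points.
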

\begin{proof}
Let $I \subset [0,\pi/2]$  be a nonempty open interval.  As
\begin{multline*}
|\{m\in\scrN(x) \mid \theta_m\in I\}|
=
|\{\xi\in\scrL : |\xi|^2\leq x, \, \xi_1, \xi_{2} \geq 0, \text{ and}
\arctan(\xi_2/\xi_1)\in I\}|, 
\end{multline*}
and the latter can be interpreted as the number of $\Z^2$-lattice
points inside the intersection of an ellipse with a circular sector,
dilated by $\sqrt{x}$, we find that
$|\{m\in\scrN(x) \mid \theta_m\in I\}| \sim c_{I} \cdot x$ as $x \to
\infty$, for some $c_{I} >0$.  Since the interval $I$ can be freely
chosen, the result follows. 
\end{proof}

The set of limit points of the sequence $(\bra
Op(P_J)g_{\lambda_m},g_{\lambda_m} \ket)_{m\in\tilde{\scrN}}$ is
thus given by
$$
\left\{\sum_{2|k}\cos(k\theta)\hat{a}(0,k) \Big| \theta\in[0,\pi/2]\right\}.$$
%
Now, since for $k$  even,
$$\cos(k\theta)=\frac{1}{4}(e^{\i k\theta}+e^{-\i k\theta}+e^{\i k(\pi+\theta)}+e^{\i k(\pi-\theta)}),$$
we find that all such limit set  elements
can  be rewritten as
\begin{multline}
\sum_{2|k} \cos(k\theta)\hat{a}(0,k)
=\int_{S^*\T^2}a(x,\phi) \sum_{2|k}\cos(k\theta)e^{-\i k\phi} \frac{dx\;d\phi}{\Vol(S^*\T^2)}\\
=\int_{S^*\T^2}a(x,\phi) \frac{dx}{\Vol(\T^2)}
 \quad \times\frac{1}{4}(\delta_\theta+\delta_{-\theta}+\delta_{\pi+\theta}+\delta_{\pi-\theta})(\phi) \frac{d\phi}{2\pi},
\end{multline}
and the proof is concluded.

\section{The strong coupling limit}
\label{sec:strong-coupl-limit}

Let $\lambda_{m}$ be any interlacing sequence.  Then there exists a
{\em positive density} subsequence $\scrN' \subset \scrN$ such that
$\{G_{\lambda_{m}}\}_{m \in \scrN'}$ {\em does not equidistribute}.

As before, for notational convenience we define $\scrN(T) := \{ n \in
\scrN : n \leq T \}$.
%
%
Further, let $n_{1}, \ldots, n_{k}, \ldots$ be ordered representatives of the
elements in the set $\scrN$ (i.e., so that $n_{1} < n_{2} < \ldots$),
and let $s_{i} := n_{i+1} -n_{i}$ denote the consecutive spacings.

\begin{lem}
\label{lem:many-small-gaps}
The number of $i \leq T$ such that $s_{i} > G > 0$ is $\leq  T/G \cdot (4/\pi+o(1)) $.
\end{lem}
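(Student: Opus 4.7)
The plan is to exploit the telescoping identity
\[
\sum_{i \leq T} s_i = n_{T+1} - n_1
\]
together with Weyl's law \eqref{eq:Weyl-law}, which gives $|\scrN(X)| \sim \frac{\pi}{4} X$, equivalently $n_i \sim \frac{4i}{\pi}$ as $i \to \infty$. In particular, $n_{T+1} = \frac{4T}{\pi}(1+o(1))$ as $T \to \infty$.

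The argument is then a one-line application of the pigeonhole/Markov principle. Let $N(T,G) := \#\{i \leq T : s_i > G\}$. Since all spacings are positive, we may bound
\[
N(T,G) \cdot G \;\leq\; \sum_{\substack{i \leq T \\ s_i > G}} s_i \;\leq\; \sum_{i \leq T} s_i \;=\; n_{T+1} - n_1.
\]
Dividing by $G$ and invoking $n_{T+1} = \frac{4T}{\pi}(1+o(1))$ yields
\[
N(T,G) \;\leq\; \frac{n_{T+1} - n_1}{G} \;\leq\; \frac{T}{G}\left(\frac{4}{\pi} + o(1)\right),
\]
which is precisely the claimed bound.

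There is essentially no obstacle: the only non-trivial input is Weyl's law on the torus, which is stated as \eqref{eq:Weyl-law} in the paper. The diophantine hypothesis on $a^4$ plays no role here; the lemma is a purely asymptotic/averaging statement about the distribution of the distinct norms squared.
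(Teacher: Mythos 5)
Your proof is correct and is essentially the paper's own argument: both telescope $\sum_{i\le T}s_i=n_{T+1}-n_1$, invert Weyl's law \eqref{eq:Weyl-law} to get $n_{T+1}\sim 4T/\pi$, and apply Markov/Chebyshev. You simply spell out the telescoping step that the paper leaves implicit.
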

\begin{proof}
Recalling that $\scrN(T) \sim T \cdot \pi/4 $, we find that  $\sum_{i \leq T}
s_{i} = (1+o(1))\cdot 4T/\pi$.  Since $s_{i} \geq 
0$ for all $i$, the statement is an immediate consequence of
Chebychev's inequality. 
\end{proof}

\begin{lem}  
\label{short-interval-multiplicity-bound}
Given $D>0,E \geq 1$,
$$
|\{n \in \scrN(T) :  |\scrN(T) \cap [n-D,n+D]| > E+1       \}|
\ll
\frac{DT}{E}.
$$  
\end{lem}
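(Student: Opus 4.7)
The plan is to prove this via a standard double-counting argument, using Theorem~\ref{EMM} (the Eskin--Margulis--Mozes pair correlation estimate) as the only nontrivial input. The idea is that each ``bad'' point $n$ (one with more than $E+1$ neighbors in $[n-D,n+D]$) contributes at least $E+1$ ordered pairs to the global count of close pairs in $\scrN(T)$, and the pair correlation estimate bounds the total number of close pairs.

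More concretely, let $B := \{n \in \scrN(T) : |\scrN(T) \cap [n-D,n+D]| > E+1\}$. For each $n \in B$, the interval $[n-D, n+D]$ contains at least $E+2$ elements of $\scrN(T)$, hence at least $E+1$ elements of $\scrN(T) \setminus \{n\}$. This gives
$$
(E+1)|B| \;\leq\; \bigl|\{(n,m) \in B \times \scrN(T) : m \neq n, \; |m-n| \leq D\}\bigr|
\;\leq\; \bigl|\{(n,m) \in \scrN(T)^2 : 0 < |m-n| \leq D\}\bigr|.
$$
Applying Theorem~\ref{EMM} (or the consequence \eqref{eq:norm-pair-correlation}) to the interval $(0,D]$ and doubling to account for both orderings, the latter quantity is $\ll DT$. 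Dividing by $E+1 \asymp E$ yields $|B| \ll DT/E$, as claimed.

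The one point that requires mild care is uniformity in $D$: Theorem~\ref{EMM} is an asymptotic statement, so directly invoking it presumes $D$ is fixed (or at least bounded away from $0$ and from $\infty$ relative to $T$). If one wants uniform bounds, one can instead partition $(0,D]$ into unit intervals and apply the $L^2$ estimate $\sum_{k \leq T} M(k)M(k+l) \ll T$ already established in the proof of Lemma~\ref{lem:difference-bigger-than-three}. Summing over the $O(D+1)$ relevant shifts gives the uniform bound $|\{(n,m) \in \scrN(T)^2 : 0 < |m-n| \leq D\}| \ll (D+1)T$, which suffices whenever $D \gtrsim 1$ (the regime in which the statement is nontrivial). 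This is the main technical obstacle, but it is already handled by the machinery developed earlier in this section, so no new ingredient is needed.
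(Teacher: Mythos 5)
Your proof is correct and matches the paper's own argument: the paper likewise expresses the number of close pairs as $\sum_{n}\bigl(|\scrN(T)\cap[n-D,n+D]|-1\bigr)$, bounds this by $\ll DT$ via the Eskin--Margulis--Mozes pair correlation result, and then applies Chebyshev's inequality, which is exactly your double-counting step $(E+1)|B|\leq\#\{(n,m):\,0<|m-n|\leq D\}$. Your added caution about uniformity in $D$ is a fair observation that the paper itself glosses over, though it is moot in context since the lemma is applied only with $D=3$ fixed.
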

\begin{proof}
By Theorem~\ref{EMM} (see (\ref{eq:norm-pair-correlation})),
\begin{multline}
\sum_{n \in \scrN(T)} (|\scrN(T) \cap [n-D,n+D]| -1)
\\=
|\{n,m \in \scrN(T) : m \neq n, |m-n| \leq D \}|
\sim  \pi^2/16 \cdot 2D \cdot T
\end{multline}
and hence, by Chebychev, 
$$
|\{n \in \scrN(T) :  |\scrN(T) \cap [n-D,n+D]| > E+1       \}| \ll 
\frac{DT}{E}
$$
\end{proof}

We can now finish the proof.  Define $\scrN'$ as follows: for $G$
large take $n \in
\scrN$ such that the gap to the nearest left neighbour is at most $G$;
by  Lemma~\ref{lem:many-small-gaps} this sequence has density at least 
$1-2/G$.

By Chebychev's inequality and
Lemma~\ref{lem:difference-bigger-than-three} (note that the
Lemma is valid also in the
stroung coupling limit) we may chose $F$
sufficiently large so that 
$$
|\{ m \in \scrN(T) :  \sum_{n \in \scrN(T) : |m-n| > 3}
\frac{1}{|m-n|^{2} } > F
\}| \leq T/G; 
$$
remove  all such
$m$ and we are left with a sequence of density at least $1-3/G$.

Next take $D=3$ in Lemma~\ref{short-interval-multiplicity-bound}, and
  choose $E$ sufficiently large so that
$$
|\{n \in \scrN(T) :  |\scrN(T) \cap [n-D,n+D]| > E  +1     \}|
\leq T /G;
$$
removing also these elements we are left with a set of density at least
$1-4/G$.

Now, for $m \in \scrN'$ we have the following:
\begin{enumerate}
\item $|\lambda_{m}-m| \leq G$,
\item $|\{ n \in \scrN: 0 < |m-n| \leq 3 \}| \leq E$
\item $\sum_{n \in \scrN : |m-n| > 3} \frac{1}{(m-n)^{2}} \leq F$
\end{enumerate}
Thus, if we consider pure momentum observables, and given $n \in
\scrN$ we let $\mu_n$ denote the measure on the unit circle consisting
of four delta measures (corresponding to lattice points lying on a
circle of radius $\sqrt{n}$), we find that the measure --- not
necessarily a probability measure since we have not yet normalized ---
associated with $G_{\lambda_{m}}$ is given by
$$
\sum_{n \in \scrN}  \frac{\mu_{n}}{(n-\lambda_{m})^{2}}
=
\frac{\mu_{m}}{(m-\lambda_{m})^{2}}
+
\sum_{n \in \scrN : 0< |n-m| \leq 3}  \frac{\mu_{n}}{(n-\lambda_{m})^{2}}
+
\sum_{n \in \scrN : |n-m| > 3}  \frac{\mu_{n}}{(n-\lambda_{m})^{2}}
$$
where the first term is $\gg 1/G^{2}$, the second sum has at most $E$
terms, and the last sum is $\ll F$.
In particular, for $G$ fixed the mass contribution from the first two
terms
$$
\frac{\mu_{m}}{(m-\lambda_{m})^{2}}
+
\sum_{n \in \scrN : 0< |n-m| \leq 3}  \frac{\mu_{n}}{(n-\lambda_{m})^{2}}
$$
is uniformly bounded from below, and the number of terms in the sum is
uniformly bounded from above.
Hence the finite sum
$$
\sum_{n \in \scrN : |n-m| \leq 3}
\frac{\mu_{n}}{(n-\lambda_{m})^{2}}
$$
carries mass uniformly bounded from below; after normalizing so that
we obtain a probability measure, we find that the normalized measure
will have a positive proportion of its mass on a {\em finite} number
of points. 

\begin{appendix}
\section{Dirichlet boundary conditions}

In \v{S}eba's original paper \cite{Seba} the author considers an irrational rectangle $D$ with a delta potential placed in the interior of $D$ and Dirichlet boundary conditions. The setting of the torus has the advantage that calculations are much simplified because of translation invariance, i. e. the position of the potential is not important. The subject of this appendix is to illustrate how our proof can easily be modified for this setting. A modification would work in the analogous and correspond to a different character in the Fourier representation of the eigenfunctions.

\subsection{The spectrum and eigenfunctions}

Let $D=[0,2\pi a]\times[0,2\pi/a]$, $a^4\notin\Q$ diophantine. Let $z\in\interior{D}$. We study the self-adjoint extensions of the restricted Dirichlet Laplacian $-\Delta|_{D_0}$, where $D_0=\{f\in C^\infty_c(D\setminus\{z\}) \mid f|_{\partial D}=0\}$. This operator has deficiency indices $(1,1)$ and we denote the one parameter family of self-adjoint extensions by $\{-\Delta_\varphi^D\}_{\varphi\in(-\pi,\pi]}$. 

The eigenfunctions of $-\Delta_\varphi^D$ are given by the Green's functions
$$G_\lambda^D(x)=\sum_{\substack{\xi\in\scrL \\
    \xi_1,\xi_2>0}}\frac{\psi_\xi(x)\overline{\psi_\xi(z)}}{|\xi|^2-\lambda},
\quad \psi_\xi(x)=\frac{1}{\pi^2}\sin(\xi_1 x_1)\sin(\xi_2 x_2)$$
where $\scrL=\Z(1/a,0)\oplus\Z(0,a)$. The eigenvalues are solutions of
the equation 
\begin{equation}
\sum_{\substack{\xi\in\scrL \\ \xi_1,\xi_2>0}}|\psi_\xi(z)|^2
\left\{\frac{1}{|\xi|^2-\lambda}-\frac{1}{|\xi|^2+1}\right\}=C_{\scrL}\tan\left(\frac{\varphi}{2}\right)
\end{equation}
where $C_{\scrL}=\sum_{\xi\in\scrL}\frac{1}{|\xi|^4+1}$.

\subsection{Modification of the proof of Theorem \ref{MomentumScars}}

We can rewrite the function $G_\lambda^D$ as
\begin{equation}
\sum_{\xi\in\scrL}\frac{\chi(\xi)\overline{\psi_\xi(z)}
e^{\i\left\langle x,\xi \right\rangle}}{|\xi|^2-\lambda}, \quad \text{where\;} \chi(\xi)=\sgn(\xi_1)\sgn(\xi_2)
\end{equation}
and 
\begin{equation*}
\sgn(x)=
\begin{cases}
1, \quad \text{if\;} x>0\\
0, \quad \text{if\;} x=0\\
-1, \quad \text{if\;} x<0.
\end{cases}
\end{equation*}
\begin{proof}
To see this, first of all define, given $\xi=(\xi_1,\xi_2)$, $\xi_1,\xi_2>0$, define $\bar{\xi}=(\xi_1,-\xi_2)$.
We may expand the Laplacian eigenfunctions into complex exponentials
\begin{equation}
\begin{split}
\psi_\xi(x)=&\frac{1}{\pi^2}\sin(\xi_1 x_1)\sin(\xi_2 x_2)\\
=&-\frac{1}{4\pi^2}(e^{\i\xi_1 x_1}-e^{-\i\xi_1 x_1})(e^{\i\xi_2 x_2}-e^{-\i\xi_2 x_2})\\
=&-\frac{1}{4\pi^2}\sum_{\eta=\xi,-\xi,\bar{\xi},-\bar{\xi}}\chi(\eta)e^{\i\left\langle \eta,x \right\rangle}.
\end{split}
\end{equation}
Hence, we obtain (noting $\psi_\xi=0$ if $\xi_1\xi_2=0$)
\begin{equation}
\begin{split}
G_\lambda^D(x)
=&\sum_{\substack{\xi\in\scrL \\ \xi_1,\xi_2>0}}\frac{\psi_\xi(x)\overline{\psi_\xi(z)}}{|\xi|^2-\lambda} \\
=&-\frac{1}{4\pi^2}\sum_{\substack{\xi\in\scrL}}\frac{\chi(\xi)\overline{\psi_\xi(z)}e^{\i\left\langle \xi,x \right\rangle}}{|\xi|^2-\lambda}.
\end{split}
\end{equation}
\end{proof}

Let $g_\lambda^D=G_\lambda^D/\|G_\lambda^D\|_2$
and define $$d(\xi):=\frac{\chi(\xi)\overline{\psi_\xi(z)}}{|\xi|^2-\lambda}.$$
We then obtain for the matrix element of a pure momentum monomial 
$e_{0,k}$ that
\begin{equation}\label{dirichlet pure momentum matrix element}
\begin{split}
\left\langle Op(e_{0,k})g_\lambda^D,g_\lambda^D\right\rangle
=&
\frac{\sum_{\xi\in\scrL\setminus\{0\}} 
(\tilde{\xi}/|\xi|)^k|d(\xi)|^2+|d(0)|^2}{
\sum_{\xi\in\scrL_{0}}|d(\xi)|^2} \\
=&
\frac{ 
\sum_{n\in \scrN}\frac{\delta_n w_k(n)}{(n-\lambda)^2}}
{ 
\sum_{n\in \scrN}\frac{\delta_n \cdot r(n)}{(n-\lambda)^2}}
\end{split}
\end{equation}
where $\delta_n=|\psi_{\xi(n)}(z)|^2$ and $\xi(n)\in\scrL$ is the lattice
vector which solves the equation $|\xi|^2=n,\;\xi_1,\xi_2 \geq 0$,
and
 $$w_k(n)=\sum_{\substack{\xi\in\scrL \\
    |\xi|^2=n}}\left(\frac{\tilde{\xi}}{|\xi|}\right)^k.$$ 

{\bf Assumption:} Suppose that the position $z\in\interior{D}$ is
``generic''\footnote{
In the case of rational coordinates there will be a positive proportion of eigenvalues whose eigenfunctions vanish at the position of the scatterer and therefore do not feel its effect. In the generic case of irrational coordinates all eigenfunctions feel the effect of the scatterer, therefore there are only ``new'' eigenfunctions.
}
in the sense that $z_1 a,z_2/a\notin\Q$. This ensures that $\delta_n>0$ for all
$n\in\scrN$. 

{\bf Clumping:} The proof of \cite{RU2} can easily be modified for rectangles with Dirichlet boundary conditions. Thus we obtain the analogue of Theorem \ref{clumping} for the operator $-\Delta_\varphi^D$.

In order to construct localized semiclassical measures we
pick
a subsequence $n\in\scrN'\subset\scrN$, of density $1-\epsilon$ for any $\epsilon>0$, such that
$\liminf_n\delta_n=\delta>0$
\footnote
{
Such a sequence may easily be constructed by noticing that the set
$\{ (z_{1} \xi_1, z_{2} \xi_{2} ) \}_{\xi \in \scrL}$ equidistributes modulo
$[0,2\pi]^2$ if $z$ is generic in the above sense. 
}. 
For mixed monomials $e_{\zeta,k}$ we may
now apply exactly the same argument as in the proof of Theorem
\ref{mixed}, to see that $\lim_{n\in\scrN'}\left\langle
  Op(e_{\zeta,k})g_{\lambda_n}^D,g_{\lambda_n}^D\right\rangle=0$. The
analogue of Proposition \ref{QEfails} is again proved in exactly the
same way as above. Hence the analogue of Theorem \ref{MomentumScars}
for diophantine rectangles with Dirichlet boundary conditions
follows. 
\begin{remark}
\label{rem:strong-coupling-extension}
In a similar fashion we can prove the result also for the strong
coupling limit (see section 4), where within a subsequence of positive
density the eigenfunctions have positive mass on a finite number of
Dirac masses in momentum space. 
\end{remark}

\end{appendix}

\end{document}